\newtheorem{theorem}{Theorem}
\newtheorem{question}{Question}
\newtheorem{lemma}{Lemma}
\newtheorem{claim}{Claim}
\title{Reconfiguration of Vertex Colouring and Forbidden Induced Subgraphs}
\author{Manoj Belavadi
 \thanks{Department of Mathematics, Wilfrid Laurier University,
 Waterloo, ON, Canada, N2L 3C5. Email: \texttt{mbelavadi@wlu.ca}. ORCID: 0000-0002-3153-2339. Research supported by the Natural Sciences and Engineering Research Council of Canada (NSERC) grant RGPIN-2016-06517.}
 
\and Kathie Cameron
 \thanks{Department of Mathematics, Wilfrid Laurier University,
 Waterloo, ON, Canada, N2L 3C5. Email: \texttt{kcameron@wlu.ca}. ORCID: 0000-0002-0112-2494. Research supported by the Natural Sciences and Engineering Research Council of Canada (NSERC) grant RGPIN-2016-06517.}
 \and Owen Merkel
 \thanks{Department of Mathematics, Wilfrid Laurier University,
 Waterloo, ON, Canada, N2L 3C5. Email: \texttt{owenmerkel@gmail.com}. ORCID: 0000-0002-8839-6243. Research supported by the Natural Sciences and Engineering Research Council of Canada (NSERC) grant RGPIN-2016-06517.}}
\begin{document}
\maketitle


\begin{abstract}
The reconfiguration graph of the $k$-colourings, denoted $\mathcal{R}_k(G)$, is the graph whose vertices are the $k$-colourings of $G$ and two colourings are adjacent in $\mathcal{R}_k(G)$ if they differ in colour on exactly one vertex. In this paper, we investigate the connectivity and diameter of $\mathcal{R}_{k+1}(G)$ for a $k$-colourable graph $G$ restricted by forbidden induced subgraphs. We show that $\mathcal{R}_{k+1}(G)$ is connected for every $k$-colourable $H$-free graph $G$ if and only if $H$ is an induced subgraph of $P_4$ or $P_3+P_1$. We also start an investigation into this problem for classes of graphs defined by two forbidden induced subgraphs. We show that if $G$ is a $k$-colourable ($2K_2$, $C_4$)-free graph, then $\mathcal{R}_{k+1}(G)$ is connected with diameter at most $4n$. Furthermore, we show that $\mathcal{R}_{k+1}(G)$ is connected for every $k$-colourable ($P_5$, $C_4$)-free graph $G$. \\
\\
\textbf{Keywords}: reconfiguration graph, forbidden induced subgraph, $k$-colouring, $k$-mixing, frozen colouring.
\end{abstract}

\section{Introduction}

Let $G$ be a finite simple graph with vertex-set $V(G)$ and edge-set $E(G)$. We use $n = |V(G)|$ to denote the number of vertices of $G$ when the context is clear. For a positive integer $k$, a \emph{$k$-colouring} of $G$ is a mapping $\alpha \colon V(G) \to \{1, 2, \ldots, k\}$ such that $\alpha(u) \neq \alpha(v)$ whenever $uv \in E(G)$. We say that $G$ is \emph{$k$-colourable} if it admits a $k$-colouring and the \emph{chromatic number} of $G$, denoted $\chi(G)$, is the smallest integer $k$ such that $G$ is $k$-colourable. The set of vertices that are assigned the same colour is called a \emph{colour class}. We say that the colour classes of a colouring $\alpha$ of $G$ matches the colour classes of a colouring $\beta$ of $G$ if both colourings induce the same colour classes.

The \emph{reconfiguration graph of the $k$-colourings}, denoted $\mathcal{R}_k(G)$, is the graph whose vertices are the $k$-colourings of $G$ and two colourings are joined by an edge if they differ in colour on exactly one vertex. We say that $G$ is \emph{$k$-mixing} if $\mathcal{R}_k(G)$ is connected and the \emph{$k$-recolouring diameter} of $G$ is the diameter of $\mathcal{R}_k(G)$. Given two $k$-colourings $\alpha$ and $\beta$ of $G$, deciding whether there exists a path between the two colourings in $\mathcal{R}_k(G)$ was proved to be PSPACE-complete for all $k >$ 3 \cite{Bonsma2009}. The problem remains PSPACE-complete for graphs with bounded bandwidth and hence bounded treewidth \cite{Wrochna}.

A $k$-colouring of a graph $G$ is called \emph{frozen} if it is an isolated vertex in $\mathcal{R}_k(G)$. In other words, for every vertex $v \in V(G)$, each of the $k$ colours appears in the closed neighbourhood of $v$. One way to show that a graph $G$ is not $k$-mixing is to exhibit a frozen $k$-colouring of $G$. Since every $k$-colouring of $K_k$ is frozen, it is common to study $\mathcal{R}_{k+1}(G)$ for a $k$-colourable graph $G$. For the rest of this paper, we assume that $\ell \ge \chi(G)+1$.

A graph $G$ is $H$-free if no induced subgraph of $G$ is isomorphic to $H$. For a collection of graphs $\mathcal{H}$, $G$ is $\mathcal{H}$-free if $G$ is $H$-free for every $H \in \mathcal{H}$. Let $P_n$, $C_n$, and $K_n$ denote the path, cycle, and complete graph on $n$ vertices, respectively. For two vertex-disjoint graphs $G$ and $H$, the \emph{disjoint union} of $G$ and $H$, denoted by $G+H$, is the graph with vertex-set $V(G) \cup V(H)$ and edge-set $E(G) \cup E(H)$. For a positive integer $r$, we use $rG$ to denote the graph obtained from the disjoint union of $r$ copies of $G$. The goal of this paper is to answer the following question.

\begin{question}
\label{q1}
For which $H$ is every $H$-free graph $\ell$-mixing?
\end{question}

Question \ref{q1} has been answered completely when $H$ is a 3-vertex graph, namely $3K_1$, $P_2+P_1$, $P_3$, and $K_3$. The last author \cite{merkel2021} showed that every $3K_1$-free graph is $\ell$-mixing and the $\ell$-recolouring diameter is at most $4n$. The class of $(P_2+P_1)$-free graphs is a subclass of $P_4$-free graphs, and so we refer to the results on $P_4$-free graphs in this case (see Theorem \ref{thm:p4free}). A graph is $P_3$-free if it is the disjoint union of cliques and it is well known that every $P_3$-free graph is $\ell$-mixing (see e.g. \cite{bonamy2014}). Bonamy and Bousquet \cite{bonamy2018} showed that the $\ell$-recolouring diameter of a $P_3$-free graph is at most $2n$. Cereceda, van den Heuvel, and Johnson \cite{cereceda2008} showed that for all $\ell \ge 3$, there is a bipartite graph that is not $\ell$-mixing (see Figure \ref{fig:frozenbipartite}). Since bipartite graphs are a subclass of $K_3$-free graphs, it follows that not every $K_3$-free graph is $\ell$-mixing. See Table \ref{table:3vertex} for a summary of these results.  

\begin{center}
\begin{table}[H]
\centering
 \begin{tabular}{|c c c|} 
 \hline
 $H$ & Always $\ell$-mixing & Upper bound on the $\ell$-recolouring diameter \\ 
 \hline\hline
 $3K_1$ & YES \cite{merkel2021} & $4n$ \cite{merkel2021} \\ 
 \hline
 $P_2+P_1$ & YES \cite{bonamy2018} & $4n$ \cite{biedl2021} (Theorem \ref{thm:p4free}) \\
 \hline
 $P_3$ & YES \cite{bonamy2014} & $2n$ \cite{bonamy2018} \\
 \hline
 $K_3$ & NO \cite{cereceda2008} & $\infty$ \cite{cereceda2008} (see Figure \ref{fig:frozenbipartite}) \\
 \hline
\end{tabular}
\caption{Summary of recolouring an $H$-free graph for a 3-vertex graph $H$.}
\label{table:3vertex}
\vspace{-8mm}
\end{table}
\end{center}

Next we survey results on Question \ref{q1} when $H$ is a 4-vertex graph. There are 11 graphs on 4 vertices (see Figure \ref{fig:4vertex}). Bonamy and Bousquet \cite{bonamy2018} showed that every $P_4$-free graph $G$ is $\ell$-mixing and the $\ell$-recolouring diameter is at most $2\cdot \chi(G) \cdot n$. Biedl, Lubiw, and Merkel \cite{biedl2021} investigated a class of graphs that generalizes $P_4$-free graphs. The following theorem was not directly stated in \cite{biedl2021} but follows from the proof of Theorem 1 in \cite{biedl2021} and improves the bound on the $\ell$-recolouring diameter of a $P_4$-free graph.

\begin{figure}
    \centering
    \begin{tikzpicture}[scale=1.2]
    \tikzstyle{vertex}=[circle, draw, fill=black, inner sep=0pt, minimum size=5pt]
    \node[vertex](1) at (0,0){};
    \node[vertex](2) at (1,0){};
    \node[vertex](3) at (0,1){};
    \node[vertex](4) at (1,1){};
    \node[] at (0.5, -1){$4K_1$};
    \node[vertex](5) at (0,-2){};
    \node[vertex](6) at (1,-2){};
    \node[vertex](7) at (0,-3){};
    \node[vertex](8) at (1,-3){};
    \node[] at (0.5, -4){$K_4$};
    \draw(5)--(6);
    \draw(5)--(7);
    \draw(5)--(8);
    \draw(6)--(7);
    \draw(6)--(8);
    \draw(7)--(8);
    \node[vertex](9) at (2,0){};
    \node[vertex](10) at (3,0){};
    \node[vertex](11) at (2,1){};
    \node[vertex](12) at (3,1){};
    \draw(9)--(11);
    \node[] at (2.5, -1){co-diamond};
    \node[vertex](13) at (2,-2){};
    \node[vertex](14) at (3,-2){};
    \node[vertex](15) at (2,-3){};
    \node[vertex](16) at (3,-3){};
    \draw(13)--(14);
    \draw(13)--(15);
    \draw(14)--(15);
    \draw(14)--(16);
    \draw(15)--(16);
    \node[] at (2.5, -4){diamond};
    \node[vertex](17) at (4,0){};
    \node[vertex](18) at (4,1){};
    \node[vertex](19) at (5,0){};
    \node[vertex](20) at (5,1){};
    \draw(17)--(18);
    \draw(19)--(20);
    \node[] at (4.5, -1){$2K_2$};
    \node[vertex](21) at (4,-2){};
    \node[vertex](22) at (4,-3){};
    \node[vertex](23) at (5,-2){};
    \node[vertex](24) at (5,-3){};
    \draw(21)--(22);
    \draw(22)--(24);
    \draw(23)--(21);
    \draw(24)--(23);
    \node[] at (4.5, -4){$C_4$};
    \node[vertex](25) at (6,0){};
    \node[vertex](26) at (6,1){};
    \node[vertex](27) at (7,0){};
    \node[vertex](28) at (7,1){};
    \draw(25)--(26);
    \draw(25)--(27);
    \node[] at (6.5, -1){$P_3+P_1$};
    \node[vertex](29) at (6,-2){};
    \node[vertex](30) at (6,-3){};
    \node[vertex](31) at (7,-2){};
    \node[vertex](32) at (7,-3){};
    \draw(29)--(30);
    \draw(29)--(31);
    \draw(30)--(31);
    \draw(29)--(32);
    \node[] at (6.5, -4){paw};
    \node[vertex](33) at (8,0){};
    \node[vertex](34) at (8,1){};
    \node[vertex](35) at (9,0){};
    \node[vertex](36) at (9,1){};
    \draw(33)--(34);
    \draw(33)--(35);
    \draw(33)--(36);
    \node[] at (8.5, -1){claw};
    \node[vertex](37) at (8,-2){};
    \node[vertex](38) at (8,-3){};
    \node[vertex](39) at (9,-2){};
    \node[vertex](40) at (9,-3){};
    \draw(37)--(38);
    \draw(37)--(39);
    \draw(38)--(39);
    \node[] at (8.5, -4){co-claw};
    \node[vertex](41) at (10,-1.5){};
    \node[vertex](42) at (10,-0.5){};
    \node[vertex](43) at (11,-1.5){};
    \node[vertex](44) at (11,-0.5){};
    \draw(41)--(42);
    \draw(41)--(43);
    \draw(44)--(43);
    \node[] at (10.5, -2.5){$P_4$};
    \end{tikzpicture}
    \caption{The 11 graphs on 4 vertices.}
    \label{fig:4vertex}
\end{figure}
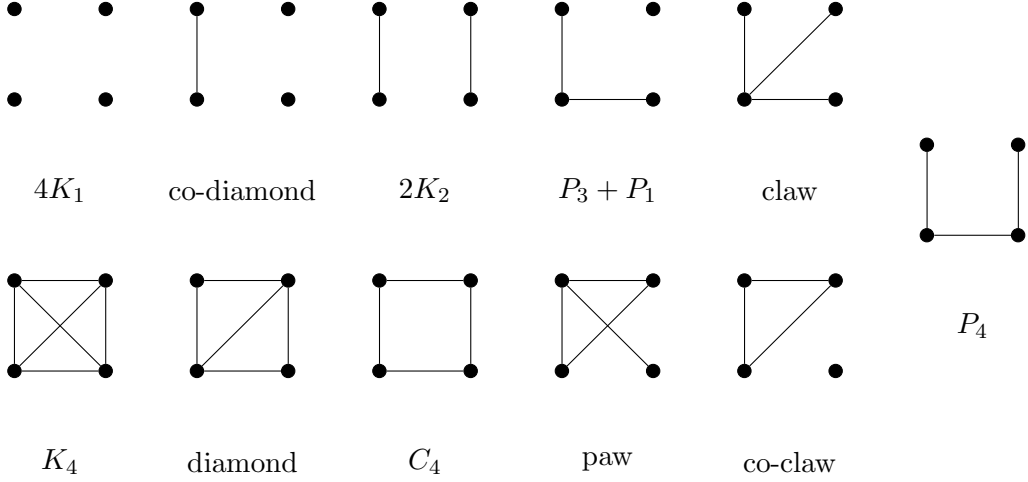

\begin{theorem}[\cite{biedl2021}]
\label{thm:p4free}
The $\ell$-recolouring diameter of a $P_4$-free graph is at most $4n$.
\end{theorem}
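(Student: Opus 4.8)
The plan is to exploit the recursive structure of $P_4$-free graphs. A $P_4$-free graph (cograph) on at least two vertices is either a disjoint union $G = G_1 + G_2$ or a join $G = G_1 \times G_2$ of two smaller $P_4$-free graphs, a decomposition encoded by the cotree. I would fix, for each cograph $G$, a canonical $\ell$-colouring $\gamma_G$ defined along the cotree: a single vertex gets colour $1$; for a disjoint union $G_1 + G_2$ take $\gamma_{G_1}$ on $G_1$ and $\gamma_{G_2}$ on $G_2$; for a join $G_1 \times G_2$ take $\gamma_{G_1}$ on $G_1$ (colours $1,\dots,\chi(G_1)$) and $\gamma_{G_2}$ shifted up by $\chi(G_1)$ on $G_2$ (colours $\chi(G_1)+1,\dots,\chi(G)$), which is legitimate since $\chi(G)=\chi(G_1)+\chi(G_2)\le \ell$. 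The whole theorem then reduces to the claim that \emph{every} $\ell$-colouring of $G$ can be transformed into $\gamma_G$ using at most $2n$ recolouring steps: given this, for any two colourings $\alpha,\beta$ the concatenation $\alpha \to \gamma_G \to \beta$ has length at most $2n+2n=4n$.

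I would prove the claim by induction on the cotree. The base case (one vertex) needs at most one recolouring since $\ell\ge 2$. In the disjoint-union case the two sides share no edges, so I can recolour $G_1$ to $\gamma_{G_1}$ and then $G_2$ to $\gamma_{G_2}$ independently; each side has $\ell\ge\chi(G_i)+1$ colours available, so by induction this costs at most $2n_1+2n_2=2n$ steps. For the join case the key structural fact is that, because every vertex of $G_1$ is adjacent to every vertex of $G_2$, the colours appearing on $G_1$ and on $G_2$ are always disjoint; hence if $a$ and $b$ denote the numbers of colours currently used on $G_1$ and $G_2$ then $a+b\le\ell$, while $a\ge\chi(G_1)$, $b\ge\chi(G_2)$ and the hypothesis $\ell\ge\chi(G)+1$ yields a surplus $\ell-\chi(G)\ge 1$. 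The basic engine is the observation that if $G_1$ currently uses more than $\chi(G_1)$ colours then $b\le\ell-a\le\ell-\chi(G_1)-1$, so $G_1$ sees at least $\chi(G_1)+1$ colours unused by $G_2$ and can therefore be recoloured, in one inductive pass of at most $2n_1$ steps, to any target colouring of $G_1$ whose palette avoids the colours of $G_2$; symmetrically for $G_2$. Consequently, whenever one side's canonical block happens to be disjoint from the colours currently used by the other side, I can place that side first and the other second, landing at $\gamma_G$ within $2n_1+2n_2=2n$ steps.

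The main obstacle is the \emph{deadlocked} join configuration, in which $G_1$ uses a colour of the block $L_2=\{\chi(G_1)+1,\dots,\chi(G)\}$ reserved for $G_2$ \emph{and} $G_2$ uses a colour of the block $L_1=\{1,\dots,\chi(G_1)\}$ reserved for $G_1$, so that neither side can be moved into its canonical block first. Here I would use the surplus colour (any colour in $\{\chi(G)+1,\dots,\ell\}$, which lies in neither canonical block) to park one side in the high colours, recolour the other side into its canonical block, and finally slide the parked side down. The delicate point — and the technical heart of the argument — is the amortised accounting: invoking the generic inductive bound of $2n_i$ twice on the parked side would cost up to $4n_i$ and break the budget, so one must instead track that \emph{each vertex is recoloured only a bounded number of times in total}, and arrange that the ``park'' and ``slide'' moves reuse the existing structure rather than paying the full inductive price twice. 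This is precisely where the improvement over the $2\,\chi(G)\,n$ bound of Bonamy and Bousquet \cite{bonamy2018} comes from, following the line of the proof of Theorem 1 in \cite{biedl2021}; once the per-vertex count is controlled, summing over the cotree yields the $2n$ bound to the canonical colouring and hence the claimed $4n$ recolouring diameter.
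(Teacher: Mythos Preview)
The paper does not supply its own proof of this theorem: it is stated as a corollary of the proof of Theorem~1 in \cite{biedl2021}, so there is no argument here against which to compare yours. I can only assess your proposal on its own merits.

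Your setup (cotree, canonical colouring $\gamma_G$, induction with the disjoint-union case trivial and the join case carrying the weight) is the natural one, and your observation that in a join the two sides have disjoint palettes, so by pigeonhole at least one side sees $\chi(G_i)+1$ available colours, is exactly the engine one needs. The problem is that you do not actually resolve the deadlocked join case: you identify it correctly, sketch a ``park/recolour/slide'' scheme, note that the naive bookkeeping gives $4n_i$ on the parked side, and then defer to \cite{biedl2021} for the ``amortised accounting''. That is the entire difficulty, and your proposal leaves it as a black box. As written this is a proof plan, not a proof.

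If you want to close the gap without importing the machinery of \cite{biedl2021}, the approach visible elsewhere in the present paper (see the proof of Theorem~\ref{thm:copaw}) suggests a cleaner route: aim not for a fixed canonical colouring $\gamma_G$ but only for a $\chi(G)$-colouring whose \emph{partition into colour classes} matches that of $\gamma_G$, and then invoke the Renaming Lemma (Lemma~\ref{lem:recolour}) once at the end. In a join, whichever side currently has surplus can be recoloured inductively to match the colour classes of $\gamma$ using any palette disjoint from the other side; there is no deadlock because you are not insisting on the specific colour block $L_i$. Iterating via the pigeonhole observation (one side always has surplus, or a colour is globally unused) lets you reach a colouring with the target partition by recolouring each vertex a bounded number of times, after which the Renaming Lemma costs at most $2n$. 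Working out the per-vertex count along these lines would give you an honest $O(n)$ bound without the hand-wave.
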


Feghali and Merkel \cite{feghali2021} showed that for all $p \ge 1$, there exists a $k$-colourable $2K_2$-free graph with a frozen $(k+p)$-colouring. This result together with the result of Bonamy and Bousquet \cite{bonamy2018} answers Question \ref{q1} when $H$ is a path.

\begin{theorem}[\cite{feghali2021}] \label{thm:p5free}
Every $P_t$-free graph is $\ell$-mixing if and only if $t \le 4$.
\end{theorem}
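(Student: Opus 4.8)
The plan is to prove the biconditional by treating each direction separately and reducing it to one of the two results already cited in the excerpt, the only genuine work being to set up the right inclusions between graph classes.

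For the direction $t \le 4$, I would first record the elementary observation that $P_s$ is an induced subgraph of $P_4$ for every $s \le 4$, so any graph containing an induced $P_4$ automatically contains an induced $P_s$. Taking the contrapositive, every $P_t$-free graph with $t \le 4$ is in fact $P_4$-free, i.e.\ the class of $P_t$-free graphs for $t \le 4$ is a subclass of the $P_4$-free graphs. Since Bonamy and Bousquet \cite{bonamy2018} proved that every $P_4$-free graph is $\ell$-mixing (and Theorem \ref{thm:p4free} records the stronger diameter bound), it follows immediately that every $P_t$-free graph with $t \le 4$ is $\ell$-mixing.

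For the converse I would argue the contrapositive: for every $t \ge 5$ I want to exhibit a single $P_t$-free graph that is not $\ell$-mixing. The key structural fact is that $2K_2$ is an induced subgraph of $P_t$ whenever $t \ge 5$: writing $P_t = v_1 v_2 \cdots v_t$, the two end-edges $v_1 v_2$ and $v_{t-1} v_t$ give four distinct vertices (distinctness needs $t \ge 5$) inducing two disjoint edges, since for $t \ge 5$ there is no edge between $\{v_1,v_2\}$ and $\{v_{t-1},v_t\}$. Hence every $2K_2$-free graph is $P_t$-free for all $t \ge 5$. I would then invoke the result of Feghali and Merkel \cite{feghali2021} with $p = 1$: there is a $k$-colourable $2K_2$-free graph $G$ admitting a frozen $(k+1)$-colouring. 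Putting $\ell = k+1 \ge \chi(G)+1$, this frozen colouring is an isolated vertex of $\mathcal{R}_{k+1}(G)$, so $G$ is not $(k+1)$-mixing; and because $G$ is $2K_2$-free, it is $P_t$-free for every $t \ge 5$. This produces, for each $t \ge 5$, a $P_t$-free graph that is not $\ell$-mixing, completing the contrapositive.

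Combining the two directions yields the stated equivalence. I do not expect any real obstacle here, since the substance is supplied by the two cited theorems; the only points needing care are the two induced-subgraph containments ($P_s$ in $P_4$ for $s \le 4$, and $2K_2$ in $P_t$ for $t \ge 5$) and the small bookkeeping remark that a frozen $(k+1)$-colouring genuinely disconnects $\mathcal{R}_{k+1}(G)$ — which holds as soon as $G$ has a second $(k+1)$-colouring, for instance one obtained by permuting the colour names.
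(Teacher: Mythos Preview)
Your proposal is correct and matches the paper's treatment: the paper does not give a standalone proof of this theorem but simply observes (in the sentence preceding the statement) that the Feghali--Merkel frozen $2K_2$-free construction together with the Bonamy--Bousquet result on $P_4$-free graphs yields the dichotomy, which is precisely the two-ingredient argument you spell out. Your added details---the induced-subgraph containments $P_t \subseteq P_4$ for $t \le 4$ and $2K_2 \subseteq P_t$ for $t \ge 5$, and the remark that a frozen colouring genuinely disconnects $\mathcal{R}_{k+1}(G)$---are all sound and fill in exactly the bookkeeping the paper leaves implicit.
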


The connectivity and diameter of $\mathcal{R}_{\ell}(G)$ has also been investigated for classes of graphs defined by more than one forbidden induced subgraph. Bonamy, Johnson, Lignos, Patel, and Paulusma \cite{bonamy2014} showed that every chordal graph and every chordal bipartite graph is $\ell$-mixing and the $\ell$-recolouring diameter is at most $2n^2$. Feghali and Fiala \cite{feghali2020} showed that every co-chordal graph is $\ell$-mixing and the $\ell$-recolouring diameter is at most $2n^2$. They also showed that every 3-colourable ($P_5$, co-$P_5$, $C_5$)-free graph is $\ell$-mixing and the $\ell$-recolouring diameter is at most $2n^2$ \cite{feghali2020}. Biedl, Lubiw, and Merkel \cite{biedl2021} showed that every $P_4$-sparse graph is $\ell$-mixing and the $\ell$-recolouring diameter is at most $4n^2$. The last author \cite{merkel2021} showed that for all $p \ge 1$, there exists a $k$-colourable weakly chordal graph that is not $(k+p)$-mixing.

\subsection*{Our contributions}

We completely answer Question \ref{q1} with the following theorem.

\begin{theorem}
\label{thm:main}
Every $H$-free graph is $\ell$-mixing if and only if $H$ is an induced subgraph of $P_4$ or $P_3+P_1$.
\end{theorem}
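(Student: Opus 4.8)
The plan is to prove both directions by reducing to a few base cases via one monotonicity observation: if $H'$ is an induced subgraph of $H$, then every $H'$-free graph is $H$-free, since a graph containing an induced $H$ would contain an induced $H'$. For the ``if'' direction, suppose $H$ is an induced subgraph of $P_4$ or of $P_3+P_1$. If $H$ is an induced subgraph of $P_4$, the observation shows that every $H$-free graph is $P_4$-free and hence $\ell$-mixing by Theorem \ref{thm:p4free}. If $H$ is an induced subgraph of $P_3+P_1$, then every $H$-free graph is $(P_3+P_1)$-free, so it suffices to prove the new statement that \emph{every $(P_3+P_1)$-free graph is $\ell$-mixing}; this is the only substantive positive content, and I treat it next.

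For $(P_3+P_1)$-free mixing I would first establish a dichotomy. If a $(P_3+P_1)$-free graph $G$ is disconnected, then no component contains an induced $P_3$, since such a $P_3$ together with any vertex of another component would induce $P_3+P_1$; hence every component is a clique, so $G$ is $P_3$-free and therefore $\ell$-mixing \cite{bonamy2014}. The remaining case is a connected $G$ with an induced $P_3$, say $a-b-c$ with centre $b$. Here $(P_3+P_1)$-freeness forces $\{a,b,c\}$ to be a dominating set, and a short case analysis on adjacencies to $\{a,b,c\}$ pins down a rigid structure (for instance, the set of vertices adjacent to $a$ but not to $b$ or $c$ must be a clique, since two non-adjacent such vertices together with $a$ and $c$ would induce $P_3+P_1$, and symmetric facts produce complete joins between several of these parts). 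The plan is then to use this structure together with the spare colour guaranteed by $\ell \ge \chi(G)+1$ to recolour an arbitrary colouring to a fixed canonical one, processing vertices in an order so that the canonical colour at each step can be freed in the relevant neighbourhood. I expect this connected-case recolouring to be the main obstacle: one must verify that the forced clique-and-join structure always permits clearing the target colour with the extra colour; the complete multipartite graphs (which are $(P_3+P_1)$-free, e.g.\ $C_4=K_{2,2}$) are a useful sub-case for calibrating the scheme.

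For the ``only if'' direction I argue contrapositively: for every $H$ that is \emph{not} an induced subgraph of $P_4$ or $P_3+P_1$ (call such $H$ \emph{bad}) I exhibit an $H$-free graph that is not $\ell$-mixing. By the monotonicity observation it suffices to give a non-mixing witness for each \emph{minimal} bad graph, because any bad $H$ contains a minimal bad induced subgraph $H'$, and any $H'$-free non-mixing graph is automatically $H$-free. Enumerating the minimal bad graphs is straightforward: the only bad graph on at most three vertices is $K_3$; a minimal bad graph on at least four vertices must be triangle-free (else it properly contains $K_3$), and one on at least five vertices must also be claw-free, hence of maximum degree at most $2$, which on five vertices leaves only $C_5$, while every graph on at least six vertices properly contains a (necessarily bad) five-vertex induced subgraph. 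Thus the minimal bad graphs are exactly $K_3$, $4K_1$, $K_2+2K_1$ (the co-diamond), $2K_2$, $C_4$, the claw, and $C_5$.

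Finally I supply the witnesses. The six-cycle $C_6$, coloured $1,2,3,1,2,3$ around the cycle, has a frozen $3$-colouring (each vertex already sees all three colours in its closed neighbourhood), so it is not $3$-mixing; this is a bipartite non-mixing example of the type in Figure \ref{fig:frozenbipartite}. Moreover $C_6$ is bipartite, has independence number $3$, maximum degree $2$, and girth $6$, so it is simultaneously free of $K_3$, $4K_1$, $K_2+2K_1$, $C_4$, the claw, and $C_5$, and hence is a witness for all six of these minimal bad graphs. The only remaining minimal bad graph is $2K_2$, for which the $2K_2$-free graph with a frozen colouring of Feghali and Merkel \cite{feghali2021} is the witness. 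Combining the two directions proves the theorem, with the connected $(P_3+P_1)$-free recolouring argument being the only genuinely new ingredient.
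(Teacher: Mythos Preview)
Your ``only if'' direction is correct and tidier than the paper's. The paper covers the nine bad four-vertex graphs with two infinite families $G_p$ and $B_p$ (Theorems~\ref{thm:4k1c4claw} and~\ref{thm:k4diamondpawcoclaw}) and then separately runs through all thirty-four five-vertex graphs in Lemma~\ref{lem:No5}. Your reduction to the seven \emph{minimal} bad graphs $K_3$, $4K_1$, $K_2+2K_1$, $2K_2$, $C_4$, the claw, and $C_5$ is correct, and the observation that a single graph---$C_6$ with its frozen $3$-colouring (this is exactly $B_3$)---is simultaneously free of six of them is a genuine simplification; only $2K_2$ then needs the separate citation to~\cite{feghali2021}. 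This buys brevity; the paper's families, on the other hand, give the stronger ``for all $p\ge 1$'' statements, which your single witness does not.

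Your ``if'' direction has a real gap in the $(P_3+P_1)$-free case. The connected-case plan---analyse the seven neighbourhood classes around a dominating $P_3$ and hope the resulting clique/join structure supports a greedy recolouring---is not a proof, and you flag it yourself as the main obstacle. The structure you describe does not obviously yield a workable recolouring order: for example, every connected $3K_1$-free graph (such as $C_5$, or the complement of any connected triangle-free graph) is $(P_3+P_1)$-free, and there the partition around one fixed $P_3$ carries essentially no global information. The paper sidesteps this entirely by passing to the complement: by Olariu's theorem (Theorem~\ref{pawfree}) each \emph{anticomponent} of a $(P_3+P_1)$-free graph is either $3K_1$-free or $P_3$-free. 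Since anticomponents are pairwise complete they use disjoint colour sets in any colouring, so one may recolour them one at a time to match a fixed $\chi(G)$-colouring (Claim~\ref{claim1} guarantees an anticomponent with a spare colour at each step), invoking Lemma~\ref{lem:3k1} for the $3K_1$-free anticomponents and the Renaming Lemma (Lemma~\ref{lem:recolour}) for the $P_3$-free ones. That anticomponent decomposition is the missing idea; with it the argument is routine and yields the $6n$ bound of Theorem~\ref{thm:copaw}.
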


We use the following theorems in the proof of Theorem \ref{thm:main}.

\begin{theorem}
\label{thm:4k1c4claw}
For all $p \ge 1$, there exists a $k$-colourable $(4K_1$, $C_4$, claw$)$-free graph that is not $(k+p)$-mixing.
\end{theorem}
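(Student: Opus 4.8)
The plan is to prove the statement by exhibiting, for each $p \ge 1$, a single graph $G$ that is $(4K_1, C_4, \text{claw})$-free, has chromatic number $k = 2p$, and carries a \emph{frozen} $(k+p)$-colouring. Since a frozen colouring is by definition an isolated vertex of $\mathcal{R}_{k+p}(G)$, while $G$ — being $2p$-colourable with $3p > 2p$ colours available — also admits the obvious non-frozen colouring that uses only $2p$ of the colours, the graph $\mathcal{R}_{k+p}(G)$ has at least two vertices one of which is isolated, hence is disconnected. The construction is the lexicographic product $G = C_6[K_p]$: substitute a clique $K_p$ for each vertex of the $6$-cycle, labelling the six resulting cliques (``copies'') $1, \dots, 6$ around the cycle, with copy $i$ completely joined to copies $i-1$ and $i+1$ (indices mod $6$) and non-adjacent to all others.

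Next I would verify the three forbidden-subgraph conditions and compute $\chi(G)$. An independent set of $G$ meets each copy in at most one vertex and projects to an independent set of $C_6$, so $\alpha(G) = \alpha(C_6) = 3$ and $G$ is $4K_1$-free. For claw-freeness, the neighbourhood of a vertex $v$ in copy $i$ lies in copies $i-1, i, i+1$; any independent subset of it uses at most one vertex of copy $i-1$ and one of copy $i+1$ (these two copies being non-adjacent in $C_6$), and no neighbour in copy $i$ can join them, so every neighbourhood has independence number at most $2$ and no induced $K_{1,3}$ exists. For the chromatic number, two adjacent copies induce $K_{2p}$, giving $\chi(G) \ge 2p$, while colouring the three copies of one bipartition class of $C_6$ with $\{1,\dots,p\}$ and the other three with $\{p+1,\dots,2p\}$ is proper; thus $\chi(G) = 2p =: k$.

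It remains to produce the frozen colouring and conclude. Pair the antipodal (hence non-adjacent) copies $1\&4$, $2\&5$, $3\&6$, fix a bijection between the two cliques of each pair, and let each colour class be one matched pair $\{a, a'\}$; this is a proper colouring with exactly $3p$ colour classes. For a vertex $a$ in copy $1$, its neighbourhood contains all of copies $2$ and $6$ together with the remaining $p-1$ vertices of copy $1$, and these between them meet every colour class except $a$'s own, which is covered by $a$ itself; by the symmetry of the construction the same holds at every vertex, so the colouring is frozen. With $k = 2p$ this frozen colouring uses $3p = k+p$ colours, so $\mathcal{R}_{k+p}(G)$ is disconnected and $G$ is not $(k+p)$-mixing.

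I expect the main obstacle to be the proof that $G$ is $C_4$-free, since this is the one property that does not follow immediately from the blow-up. The argument I would give is that any induced $C_4$ in $G$ must have its four vertices in four \emph{distinct} copies: two vertices of the same copy are always adjacent, so the two diagonal non-edges force their endpoints into distinct non-adjacent copies, and a short case-check shows that two consecutive vertices of the $C_4$ cannot share a copy either (this would make a diagonal pair adjacent). Once all four copies are distinct, they would induce a $C_4$ in $C_6$, which is impossible as $C_6$ has girth $6$; hence no induced $C_4$ exists in $G$.
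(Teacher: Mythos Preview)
Your proposal is correct and follows essentially the same approach as the paper: both construct $G_p = C_6[K_p]$, exhibit the $2p$-colouring and the frozen $3p$-colouring coming from antipodal pairs, and verify $(4K_1,C_4,\text{claw})$-freeness by the same neighbourhood/partition observations. The only cosmetic difference is in the $C_4$-freeness step---the paper argues that an induced $P_3$ cannot be completed to a $C_4$, while you project a putative $C_4$ down to $C_6$ and invoke girth~$6$; both are short and equivalent.
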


\begin{theorem}
\label{thm:k4diamondpawcoclaw}
For all $p \ge 1$, there exists a $k$-colourable $(K_4$, diamond, paw, co-claw, co-diamond$)$-free graph that is not $(k+p)$-mixing.
\end{theorem}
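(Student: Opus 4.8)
The plan is to replace the five-forbidden-subgraph hypothesis by a single, much more tractable condition and then to exhibit an explicit bipartite graph carrying a frozen colouring. First I would note that four of the five forbidden graphs — $K_4$, the diamond, the paw, and the co-claw $K_3+K_1$ — each contain a triangle, while the fifth, the co-diamond $K_2+2K_1$, is triangle-free. The key observation is that a triangle together with \emph{any} single further vertex $w$ already induces one of these four graphs: according to whether $w$ has $3$, $2$, $1$, or $0$ neighbours among the three triangle vertices, the four vertices induce $K_4$, the diamond, the paw, or the co-claw respectively. Hence for $n\ge 4$ a graph is $(K_4,\text{diamond},\text{paw},\text{co-claw})$-free if and only if it is triangle-free, and it therefore suffices to construct, for each $p\ge 1$, a triangle-free and co-diamond-free graph that is $2$-colourable and admits a frozen $(2+p)$-colouring; taking $k=2$, such a graph is $k$-colourable and not $(k+p)$-mixing.

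For the construction I would take the crown graph $G_n$ obtained from $K_{n,n}$ by deleting a perfect matching: its vertices are $a_1,\dots,a_n,b_1,\dots,b_n$, with $a_ib_j\in E(G_n)$ exactly when $i\neq j$, and I set $n=p+2$. Being bipartite with parts $\{a_i\}$ and $\{b_i\}$, $G_n$ is triangle-free, and for $n\ge 2$ it has chromatic number $2$, so $k=2$. The substantive point is to verify that $G_n$ is co-diamond-free. I would fix an arbitrary edge $a_ib_j$ (so $i\neq j$) and determine the vertices non-adjacent to both endpoints: a vertex $a_m$ is always non-adjacent to $a_i$ and is non-adjacent to $b_j$ only when $m=j$, while a vertex $b_m$ is always non-adjacent to $b_j$ and is non-adjacent to $a_i$ only when $m=i$; thus the only vertices avoiding both $a_i$ and $b_j$ are $a_j$ and $b_i$. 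Since $i\neq j$ these two are themselves adjacent, so no edge can be extended by \emph{two} mutually non-adjacent vertices avoiding it. As every induced co-diamond is precisely an edge together with two mutually non-adjacent vertices non-adjacent to both of its endpoints, $G_n$ has no induced co-diamond.

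Finally I would exhibit the frozen colouring. Colour each pair $a_i,b_i$ with colour $i$; this is proper because the edges $a_ib_j$ all satisfy $i\neq j$, and it uses the $n=p+2$ colours $1,\dots,n$. The closed neighbourhood of $a_i$ is $\{a_i\}\cup\{b_j:j\neq i\}$, whose colour set is $\{i\}\cup(\{1,\dots,n\}\setminus\{i\})=\{1,\dots,n\}$, and symmetrically for each $b_i$. Hence every vertex already sees all $n$ colours in its closed neighbourhood, so no vertex can be recoloured and the colouring is frozen. Therefore $G_n$ is not $n$-mixing, that is, not $(2+p)=(k+p)$-mixing, which together with the reduction of the first paragraph proves the theorem.

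I expect the only genuinely delicate point to be the interplay between co-diamond-freeness and the existence of a rich frozen colouring. Complete bipartite graphs are co-diamond-free, but (as a short check shows) a proper colouring of $K_{m,m}$ forces the two parts to use disjoint colour sets, which prevents any frozen colouring with three or more colours; so some edges must be removed to create frozen structure, yet removing edges risks opening up a co-diamond. The merit of deleting a perfect matching is exactly that it destroys completeness — enabling the diagonal colouring $c(a_i)=c(b_i)=i$ — while keeping, for every single edge, the unique two vertices that avoid it \emph{adjacent}. Striking this balance is where the care lies; the remaining verifications are routine.
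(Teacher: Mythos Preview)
Your proof is correct and uses exactly the same construction as the paper --- the crown graph $K_{n,n}$ minus a perfect matching (the paper's $B_p$, with $p=n=p'+2$ in your indexing) together with the same diagonal frozen colouring $c(a_i)=c(b_i)=i$. The one minor difference is in the verification of co-diamond-freeness: you argue directly that the only two vertices avoiding a given edge $a_ib_j$ are $a_j$ and $b_i$, which are adjacent, whereas the paper passes to the complement (two $p$-cliques joined by a perfect matching) and checks that it is diamond-free; both checks are short and your direct one is arguably cleaner.
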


\begin{theorem}
\label{thm:copaw}
Every $(P_3+P_1)$-free graph is $\ell$-mixing and the $\ell$-recolouring diameter is at most $6n$.
\end{theorem}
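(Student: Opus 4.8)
The plan is to combine a structural decomposition of $(P_3+P_1)$-free graphs with the two base classes whose recolouring behaviour is already understood: $P_3$-free graphs (diameter at most $2n$, \cite{bonamy2018}) and $3K_1$-free graphs (diameter at most $4n$, \cite{merkel2021}). Note first that $\overline{P_3+P_1}$ is the paw, so $G$ is $(P_3+P_1)$-free if and only if $\overline{G}$ is paw-free; equivalently, for every vertex $v$ the non-neighbours of $v$ induce a $P_3$-free graph, since an induced $P_3$ among the non-neighbours of $v$ would together with $v$ form an induced $P_3+P_1$. Using Olariu's characterisation of paw-free graphs (each component is triangle-free or complete multipartite), I would prove the following trichotomy and then induct on $|V(G)|$: if $G$ is $P_3$-free it is a disjoint union of cliques and we are done by \cite{bonamy2018}; if $G$ is $3K_1$-free we are done by \cite{merkel2021}; and otherwise $G$ contains both an induced $P_3$ and an induced $3K_1$, which forces $\overline{G}$ to be disconnected, so $G = G_1 \ast G_2$ is a nontrivial join of two smaller $(P_3+P_1)$-free graphs. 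The theorem therefore reduces to a Join Lemma: if $G = G_1 \ast G_2$ and each $G_i$ satisfies the theorem (is $\ell$-mixing with diameter at most $6n_i$ for all $\ell \ge \chi(G_i)+1$), then so does $G$.

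To prove the Join Lemma I would fix a canonical colouring $\gamma$ and show that every $\ell$-colouring reaches $\gamma$ in at most $3n$ steps, so that the diameter is at most $6n$. Let $\gamma$ colour $G_2$ with a fixed $\chi(G_2)$-colouring on $\{1,\dots,\chi(G_2)\}$ and colour $G_1$ with a canonical colouring on the block $\{\chi(G_2)+1,\dots,\ell\}$, which contains at least $\chi(G_1)+1$ colours. The structural fact I would use throughout is that, because every edge between $G_1$ and $G_2$ is present, the colours appearing on $G_1$ are disjoint from those on $G_2$; hence each part can be recoloured freely within the palette of all colours not currently used by the other part. Given an arbitrary colouring $\alpha$, the procedure is: first, if $G_1$ uses more than $\chi(G_1)$ colours then its available palette has size at least $|S_1| \ge \chi(G_1)+1$, so compress it to $\chi(G_1)$ colours; next, $G_2$ now has at least $\ell-\chi(G_1) \ge \chi(G_2)+1$ colours available, so recolour it to $\gamma$ on $\{1,\dots,\chi(G_2)\}$; finally recolour $G_1$ on the freed block to $\gamma$. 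Reaching the exact $\gamma$ in the last step, rather than merely a colouring with the right colour classes, is handled by the standard relabelling argument, which is cheap because a spare colour is always available; this is where the notion of matching colour classes is used.

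The main obstacle is controlling the total number of recolourings so that the constant stays at $6n$: a naive charge is fatal, since in the procedure above $G_1$ is visited twice and the inductive bound applied twice would give roughly $2\cdot 6n_1 + 6n_2$. To overcome this I would not fully canonicalise $G_1$ in the first step, but only evict it from enough colours to free one for $G_2$, and charge the recolourings so that every vertex is recoloured a bounded number of times in total on the way to $\gamma$ — at most twice inside its own part's canonicalisation (matching the $2n_i$ and $4n_i$ base bounds) plus at most once for eviction. The genuinely delicate case is $\ell = \chi(G)+1$, where a single spare colour must be threaded through the join and at most one part can carry slack at any moment, so the order of eviction and compression must be chosen carefully. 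I expect this accounting, and in particular keeping the $3K_1$-free blocks (which already cost up to $4n_i$ in isolation) within budget, to be the technical heart of the proof.
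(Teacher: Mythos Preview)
Your structural setup is the same as the paper's: Olariu's theorem gives that every anticomponent of a $(P_3+P_1)$-free graph is $P_3$-free or $3K_1$-free, and the join structure lets you treat anticomponents independently. The difference is in how the accounting is organised, and this is exactly where your proposal is still incomplete.

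The paper avoids your binary Join Lemma and its recursive bookkeeping entirely. Instead it fixes a $\chi(G)$-colouring $\gamma$ and processes the anticomponents $A_1,\dots,A_p$ one at a time, using the simple pigeonhole observation (their Claim~1) that at every moment either some colour is unused or some not-yet-processed anticomponent $A_i$ currently carries at least $\chi(A_i)+1$ colours; either way that $A_i$ has one spare colour available for its own recolouring. Crucially, on a $3K_1$-free anticomponent they invoke not the $4n$ diameter bound you cite but the sharper Lemma~\ref{lem:3k1} from \cite{merkel2021}: any $\ell$-colouring can be brought to match the colour classes of $\gamma$ by recolouring each vertex at most \emph{once}. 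On a $P_3$-free anticomponent the Renaming Lemma gives at most twice per vertex. Hence $\alpha$ reaches a colouring $\alpha'$ with the same colour classes as $\gamma$ in at most $2n$ steps, and likewise $\beta$ reaches $\beta'$; a final Renaming between $\alpha'$ and $\beta'$ costs $2n$. Total $6n$.

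Two things in your plan would have to change to make the constant come out. First, aiming for the exact target $\gamma$ from one side forces a relabelling pass on top of the class-matching pass, which is already $2+2=4$ per vertex and blows the $3n$ budget; the paper sidesteps this by meeting in the middle at the level of colour classes, not colours. Second, the recursive binary join is the source of the double-visit to $G_1$ that you flag; flattening the recursion into a single pass over all anticomponents, with Claim~1 guaranteeing slack at every step, removes that obstacle entirely and makes the eviction step unnecessary.
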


The proof of Theorem \ref{thm:copaw} leads to a polynomial-time algorithm to find a path of length at most $6n$ between any two $\ell$-colourings of $G$. See Table \ref{table:4vertex} for a summary of these results.

\begin{center}
\begin{table}[H]
\centering
 \begin{tabular}{|c c c|} 
 \hline
 $H$ & Always $\ell$-mixing & Upper bound on the $\ell$-recolouring diameter \\ 
 \hline\hline
 $4K_1$ & NO & $\infty$ (Theorem \ref{thm:4k1c4claw}) \\ 
 \hline
 co-diamond & NO & $\infty$ (Theorem \ref{thm:k4diamondpawcoclaw}) \\ 
 \hline
 $2K_2$ & NO \cite{feghali2021}  & $\infty$ \cite{feghali2021} \\ 
 \hline
 $P_3+P_1$ & YES & $6n$ (Theorem \ref{thm:copaw}) \\ 
 \hline
 claw & NO & $\infty$ (Theorem \ref{thm:4k1c4claw}) \\ 
 \hline
 $P_4$ & YES \cite{bonamy2018} & $4n$ \cite{biedl2021} (Theorem \ref{thm:p4free}) \\
 \hline
 co-claw & NO & $\infty$ (Theorem \ref{thm:k4diamondpawcoclaw}) \\ 
 \hline
 paw & NO & $\infty$ (Theorem \ref{thm:k4diamondpawcoclaw}) \\ 
 \hline
 $C_4$ & NO & $\infty$ (Theorem \ref{thm:4k1c4claw}) \\ 
 \hline
 diamond & NO & $\infty$ (Theorem \ref{thm:k4diamondpawcoclaw}) \\ 
 \hline
 $K_4$ & NO & $\infty$ (Theorem \ref{thm:k4diamondpawcoclaw}) \\ 
 \hline
\end{tabular}
\caption{Summary of recolouring an $H$-free graph for a 4-vertex graph $H$.}
\label{table:4vertex}
\vspace{-8mm}
\end{table}
\end{center}

We also start an investigation into classes of graphs defined by two forbidden induced subgraphs. 

\begin{theorem}
\label{thm:2k2c4}
Every $(2K_2, C_4)$-free graph is $\ell$-mixing and the $\ell$-recolouring diameter is at most $4n$.
\end{theorem}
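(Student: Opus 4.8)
The plan is to exploit the known structural characterisation of $(2K_2,C_4)$-free graphs as \emph{pseudo-split graphs} (Maffray and Preissmann): for such a graph $G$ the vertex set partitions as $V(G)=C\cup K\cup I$, where $K$ is a clique, $I$ is an independent set, and $C$ is either empty or induces a $C_5$, with every vertex of $C$ adjacent to every vertex of $K$ and to no vertex of $I$, while the edges between $K$ and $I$ are arbitrary. Write $m=|K|$. Two elementary observations drive the argument. First, $\chi(G)\ge m$, and if $C\neq\emptyset$ then the join of $K$ with the $3$-chromatic $C_5$ gives $\chi(G)\ge m+3$; since $\ell\ge\chi(G)+1$, the colour assignment fixed below leaves at least one colour unused on $K\cup C$, providing scratch space. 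Second, for every $u\in I$ the set $\{u\}\cup N(u)$ is a clique (its neighbours all lie in $K$), so $\deg(u)\le\omega(G)-1\le\chi(G)-1\le\ell-2$; hence $u$ sees at most $\ell-2$ colours in any proper colouring and always has at least two free colours available. The vertices of $I$ are therefore highly flexible and can be parked or finalised on demand.

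I would use the usual reduction: fix one \emph{canonical} $\ell$-colouring $\gamma$ and show that every $\ell$-colouring $\alpha$ can be transported to $\gamma$ by recolourings in which each vertex is recoloured at most twice. This yields $\operatorname{dist}(\alpha,\gamma)\le 2n$, whence $\operatorname{dist}(\alpha,\beta)\le\operatorname{dist}(\alpha,\gamma)+\operatorname{dist}(\gamma,\beta)\le 4n$ for all $\alpha,\beta$, giving connectivity and the diameter bound simultaneously. For $\gamma$ I would colour $K$ by $v_i\mapsto i$, colour $C$ (when nonempty) by a fixed proper $3$-colouring using $\{m+1,m+2,m+3\}$, and give each $u\in I$ the colour $\kappa(u)=\min(\{1,2,\dots\}\setminus\{\,j: v_j\in N(u)\,\})$, which is the least colour avoiding the final clique-colours of its neighbours; the observations above make $\gamma$ a well-defined $\ell$-colouring.

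The transformation runs in two stages. In the first stage I recolour the block $K\cup C$ to its canonical colours: the clique is sorted to $v_i\mapsto i$ and $C$ is placed on $\{m+1,m+2,m+3\}$. Sorting the clique is a standard recolouring of an injection that, with one spare colour to break cycles, costs each clique vertex at most two recolourings; whenever a target colour $i$ is blocked in $N[v_i]$ by a vertex of $I$, that vertex is displaced first, which is always possible since every vertex of $I$ has at least two free colours, and I would choose the displacement colour and the processing order so that each vertex of $I$ is touched at most once here. In the second stage I recolour each $u\in I$ directly to $\kappa(u)$; this is valid in a single step because $K$ and $C$ now carry their canonical colours and $C$ is non-adjacent to $I$, so $\kappa(u)$ is free at $u$. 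Thus each vertex of $I$ is recoloured at most twice in total, and the split-graph case $C=\emptyset$ is the same argument with the $C_5$ ignored.

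The main obstacle is the first stage, and specifically the \emph{mutual blocking} between $K$ and $C$. Every vertex of $C$ is adjacent to all of $K$, so the clique cannot occupy a low colour in $\{1,\dots,m\}$ while any $C$-vertex still carries that colour, yet the $C$-vertices cannot move onto their reserved high colours while those colours are occupied by clique vertices; in the extreme case the clique sits entirely on high colours and $C$ sits on low colours, and a clique vertex of large degree may momentarily have no admissible recolouring at all. The difficulty is to interleave the clearing of $C$, the sorting of $K$, and the displacement of $I$-blockers into a single schedule in which no vertex of $K\cup C$ is recoloured a third time. I expect this scheduling to be the crux of the proof, and the levers that make it work are exactly the three facts isolated above: $|C|\le 5$ is a constant whose moves fit within the two-touch budget for $C$, the inequality $\ell\ge\chi(G)+1$ guarantees a genuine spare colour beyond $K\cup C$, and each vertex of $I$ always has two free colours, giving the room needed to unblock the clique. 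Turning these levers into a provable two-touch schedule is the heart of the matter.
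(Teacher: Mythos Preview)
Your proposal has a genuine gap exactly where you flag it: Stage~1. You do not actually show that sorting $K$, clearing $C$, and displacing the $I$-blockers can be interleaved so that every vertex of $K\cup C$ is recoloured at most twice; the last sentence of your proposal concedes this. This is not a bookkeeping detail. When $C\neq\emptyset$ every vertex of $K$ is adjacent to all five vertices of $C$, and since $\ell$ may equal $m+4$ a clique vertex can see all $\ell-1$ other colours on $K\cup C$, leaving no parking colour at all until $C$ has been moved off the low colours; yet moving $C$ onto $\{m+1,m+2,m+3\}$ may require the clique to vacate those colours first. Under a strict two-touch budget this circularity is a real obstruction, and your outline does not resolve it.

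The paper sidesteps the whole difficulty by choosing a different intermediate target. Rather than pushing an arbitrary colouring to one fixed $\gamma$, it pushes each of the two given colourings $\gamma_1,\gamma_2$ (in at most $n$ single-vertex steps) to colourings $\alpha,\beta$ that share the same \emph{partition} into colour classes, and it does this \emph{without recolouring the clique $Q$ at all}. In the split case only $I$ is touched: each $v\in I$ is recoloured to the colour of its first non-neighbour in $Q$. In the $C_5$ case, after one preliminary move to ensure four colours appear on $C$, only two vertices of $C$ and all of $I$ are recoloured (every $v\in I$ receives the colour of a $C$-vertex, which is automatically absent from $Q$), while $Q$ and three vertices of $C$ keep their original colours. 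The resulting partitions are identical (each $Q$-vertex is a singleton class, and the remaining classes are forced by the structure), so the Renaming Lemma connects $\alpha$ and $\beta$ in at most $2n$ further steps, giving the $4n$ bound. Leaving the clique fixed is precisely what dissolves the mutual-blocking problem you encountered.
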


Note that the class of split graphs, equivalently the class of chordal and co-chordal graphs, is a subclass of ($2K_2$, $C_4$)-free graphs. Theorem \ref{thm:2k2c4} improves the upper bound on the $\ell$-recolouring diameter for a split graph from $2n^2$ to $4n$. We also investigate the superclass of ($P_5$, $C_4$)-free graphs and prove the following.

\begin{theorem}
\label{thm:p5c4}
Every $(P_5, C_4)$-free graph is $\ell$-mixing.
\end{theorem}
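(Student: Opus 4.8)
The plan is to prove the statement by induction on $n = |V(G)|$, showing that any two $\ell$-colourings of $G$ lie in the same component of $\mathcal{R}_\ell(G)$, where $\ell \ge \chi(G)+1$ throughout.

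First I would dispose of two reductions. If $G$ is disconnected, then $\mathcal{R}_\ell(G)$ is the Cartesian product of the graphs $\mathcal{R}_\ell(G_i)$ over the components $G_i$ of $G$, and hence is connected as soon as each factor is; since each $G_i$ is $(P_5, C_4)$-free with $\chi(G_i) \le \chi(G)$, the induction hypothesis applies. Next, suppose $\overline{G}$ is disconnected, so that $G$ is a join of two nonempty graphs. Because $G$ is $C_4$-free, two non-adjacent vertices on each side of the join would form an induced $C_4$, so one side must be a clique and therefore $G$ has a universal vertex $v$. Deleting $v$ drops the chromatic number by exactly one, so $\ell - 1 \ge \chi(G-v)+1$ and $G-v$ is $(\ell-1)$-mixing by induction. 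Given colourings $\alpha, \beta$ with $\alpha(v) = c$, I would then (i) keep $v$ fixed at $c$ and recolour $G - v$ to a colouring that avoids $\beta(v)$, using only the $\ell-1$ colours different from $c$; (ii) recolour $v$ to $\beta(v)$; and (iii) keep $v$ fixed at $\beta(v)$ and recolour $G - v$ to $\beta$. Each stage is a walk in a reconfiguration graph of $G-v$ that is connected by induction, so concatenating them joins $\alpha$ to $\beta$.

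It remains to treat the case in which $G$ and $\overline{G}$ are both connected. Here I would use the classical fact that every connected $P_5$-free graph has a dominating clique or a dominating induced $P_3$. If $G$ has a dominating clique $K$, then $|K| \le \chi(G) \le \ell - 1$, so in every colouring at least one colour is absent from $K$; the aim is to use this spare colour to move the colours on $K$ to a fixed canonical assignment and then recurse on the vertices dominated by $K$. If instead $G$ has a dominating induced $P_3$, say $a - b - c$, then since $G$ is $C_4$-free the non-adjacent pair $\{a, c\}$ has $b$ as its unique common neighbour; consequently every remaining vertex is adjacent to exactly one of $a, b, c$ or to exactly one of the edges $ab, bc$. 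This yields a five-part partition of $V(G)$ on which I would base the recolouring.

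The hard part will be the dominating-$P_3$ case when the clique modules of $G$ are large, the extremal instances being the clique blow-ups $C_5[K_m]$. These graphs are $(P_5, C_4)$-free, connected and with connected complement, and have minimum degree $3m - 1$, which exceeds $\chi(G) - 1 = \lceil 5m/2 \rceil - 1$; thus when $\ell = \chi(G)+1$ no vertex has degree at most $\ell - 2$, and the simplest reduction --- deleting a vertex that always sees a free colour --- is unavailable. Deleting a true twin does not help either, since such a vertex then has only a single free colour and cannot be dodged while lifting a recolouring sequence from the smaller graph. I would therefore handle these graphs directly, using the single spare colour as a buffer to shift colours cyclically around the five classes in a Kempe-chain-type rotation until a canonical colouring is reached. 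Establishing that this rotation succeeds with only one spare colour, and that the clique-module structure forced by $C_4$-freeness always reduces the dominating-$P_3$ case to this $C_5[K_m]$ skeleton, is the main technical obstacle; the remaining cases follow the inductive template above.
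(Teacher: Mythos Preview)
Your proposal has a genuine gap at exactly the point you flag: you acknowledge that the clique blow-ups $C_5[K_m]$ are ``the main technical obstacle'' and that ``establishing that this rotation succeeds with only one spare colour \ldots\ is the main technical obstacle'', but you do not actually carry this out. That is not a detail to be filled in later; it is the entire content of the theorem once the easy reductions are done, and a vague appeal to a ``Kempe-chain-type rotation'' does not settle it. The dominating-clique branch is also underspecified: a dominating clique $K$ gives you a spare colour on $K$, but it does not give you any recursive decomposition of $G-K$, whose chromatic number can equal $\chi(G)$, so ``recurse on the vertices dominated by $K$'' has no clear meaning.

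The paper bypasses the dominating-clique/dominating-$P_3$ dichotomy entirely. It quotes the Fouquet--Giakoumakis--Maire--Thuillier structure theorem for $(P_5,\overline{P_5})$-free graphs (applicable because $C_4$-free implies $\overline{P_5}$-free): if $G$ contains an induced $C_5$, then $G$ contains a $C_5$-blowup $C$ and a clique $Q$ such that $C$ is complete to $Q$ and $C$ is a component of $G-Q$. Thus $Q$ is a \emph{tight} clique cutset and $C$ is a \emph{tight} component. The paper then proves a general lemma (Lemma~\ref{cutlemma}): whenever $G$ has a tight clique cutset, one can lift a recolouring path in $G-C$ step by step to $G$, using the spare colour outside $C\cup Q$ to clear the way inside $C$ before each recolouring of a cut vertex, and invoking the inductive $\ell$-mixing of $C$ to realign $C$ at the end. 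Your five-part partition from the dominating $P_3$ is essentially rediscovering the buoy $C$; what your plan is missing is the observation that $C$ sits behind a clique cutset complete to it, which is precisely what turns the problem into two strictly smaller instances and makes the induction close without ever having to recolour a bare $C_5[K_m]$ from scratch.
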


We note that there are $P_5$-free graphs and $C_4$-free graphs that are not $\ell$-mixing. 

The rest of the paper is organized as follows. In Section \ref{sec:pre} we give definitions and terminology used throughout the paper. We prove Theorems \ref{thm:4k1c4claw} and \ref{thm:k4diamondpawcoclaw} in Section \ref{sec:frozen} and we prove Theorem \ref{thm:copaw} in Section \ref{sec:copaw}. We prove Theorem \ref{thm:main} in Section \ref{sec:main} and we prove Theorems \ref{thm:2k2c4} and \ref{thm:p5c4} in Section \ref{sec:p5c4}. We end with some open problems in Section \ref{sec:conclusion}.

\section{Preliminaries}
\label{sec:pre}

For a graph $G$, the \emph{complement} of $G$ is the graph with vertex-set $V(G)$ such that the edges of the complement are exactly the non-edges of $G$. A \emph{component} of $G$ is a maximal connected subgraph and an anticomponent of $G$ is a component of the complement of $G$.
For a vertex $v \in V(G)$, the \emph{open neighbourhood} of $v$ is the set of vertices adjacent to $v$ in $G$. The \emph{closed neighbourhood of $v$} is the set of vertices adjacent to $v$ in $G$ together with $v$. For $X,Y \subseteq V(G)$, we say that $X$ is \emph{complete} to $Y$ if every vertex in $X$ is adjacent to every vertex in $Y$. If no vertex of $X$ is adjacent to a vertex of $Y$, we say that $X$ is \emph{anticomplete} to $Y$. Let $G$ and $H$ be vertex-disjoint graphs and let $v \in V(G)$. By \emph{substituting} $H$ for the vertex $v$ of $G$, we mean taking the graph $G-v$ and adding an edge between every vertex of $H$ and every vertex of $G-v$ that is adjacent to $v$ in $G$.

For a colouring $\alpha$ of $G$ and $X \subseteq V(G)$, we say that the colour $c$ \emph{appears} in $X$ if $\alpha(x) = c$ for some $x \in X$, and we use $\alpha(X)$ for the set of colours appearing in $X$ and $|\alpha(X)|$ for the number of colours appearing on $X$. A $k$-colouring of a graph $G$ is called \emph{frozen} if it is an isolated vertex in the recolouring graph $\mathcal{R}_k(G)$. In other words, for every vertex $v \in V(G)$, each of the $k$ colours appears in the closed neighbourhood of $v$.

\section{Frozen colourings}
\label{sec:frozen}

We use frozen colourings to prove Theorems \ref{thm:4k1c4claw} and \ref{thm:k4diamondpawcoclaw}, answering Question \ref{q1} in the negative for 8 out of the 11 cases when $H$ is a 4-vertex graph. To prove Theorem \ref{thm:4k1c4claw}, we construct a family of graphs $\{G_p \mid p \ge 1\}$ such that $G_p$ is a $2p$-colourable ($4K_1$, $C_4$, claw)-free graph that has a frozen $3p$-colouring.

\begin{lemma} \label{lem:Gpfrozen}
For $p \ge 1$, let $G_p$ be the graph obtained from $C_6$ by substituting the complete graph $K_p$ into each vertex. Then $G_p$ is $(4K_1$, $C_4$, claw$)$-free, is $2p$-colourable, and has a frozen $3p$-colouring.
\end{lemma}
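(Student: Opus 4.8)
The plan is to verify the three claimed properties of $G_p$ in turn, since each follows from the structure of $G_p$ as the graph obtained by substituting $K_p$ into each vertex of $C_6$. Let me label the six vertices of $C_6$ as $v_1, \ldots, v_6$ in cyclic order, and let $B_1, \ldots, B_6$ be the corresponding blobs (each a copy of $K_p$) in $G_p$. By the substitution operation, each $B_i$ is a clique, $B_i$ is complete to $B_{i+1}$ (indices mod $6$) whenever $v_iv_{i+1}$ is an edge of $C_6$, and $B_i$ is anticomplete to $B_j$ when $v_iv_j$ is a non-edge of $C_6$. So $G_p$ has $6p$ vertices organized into six cliques arranged in a ``blown-up'' hexagon, where consecutive blobs are completely joined.

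First I would establish $2p$-colourability. Since $C_6$ is bipartite with parts $\{v_1,v_3,v_5\}$ and $\{v_2,v_4,v_6\}$, the odd-indexed blobs $B_1 \cup B_3 \cup B_5$ form an independent set of cliques and likewise for the even blobs; more usefully, $B_1 \cup B_4$ is anticomplete (since $v_1 v_4$ is a non-edge) and forms a union of two $K_p$'s, so it can be properly coloured with $p$ colours, and similarly for the pairs $\{B_2, B_5\}$ and $\{B_3, B_6\}$. Grouping the six blobs into these three antipodal pairs, each pair is an induced $2K_p$ needing exactly $p$ colours; but consecutive blobs are complete to each other, so I must check that a colouring using $2p$ colours total can be arranged. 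The cleanest argument: $\chi(G_p) = \omega(G_p)$ here, and the maximum clique is formed by two consecutive complete blobs $B_i \cup B_{i+1} = K_{2p}$, giving $\omega(G_p) = 2p$; since $C_6$ blown up by cliques is a perfect graph (complements of bipartite-derived structures, or directly since $C_6$ is perfect and substitution preserves perfection by the replacement lemma), we get $\chi(G_p) = 2p$.

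Next I would exhibit the frozen $3p$-colouring. The natural candidate uses three disjoint palettes of $p$ colours each, say colours $\{1,\ldots,p\}$, $\{p+1,\ldots,2p\}$, $\{2p+1,\ldots,3p\}$, and assigns the first palette to $B_1$ and $B_4$, the second to $B_2$ and $B_5$, the third to $B_3$ and $B_6$. This is proper because consecutive blobs receive disjoint palettes and each blob is a clique coloured by its own palette bijectively. To see it is frozen, I take any vertex $x \in B_i$ and check that all $3p$ colours appear in its closed neighbourhood: $x$ together with the rest of $B_i$ accounts for one full palette, and the two neighbouring blobs $B_{i-1}, B_{i+1}$ (both complete to $x$) carry the other two palettes in full, so all $3p$ colours are present in $N[x]$. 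Hence no vertex can be recoloured and the colouring is an isolated vertex in $\mathcal{R}_{3p}(G_p)$.

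The main obstacle, and the part requiring the most care, is verifying the forbidden-subgraph conditions, namely that $G_p$ is $(4K_1, C_4, \text{claw})$-free. For $4K_1$: an independent set can contain at most one vertex from each blob (blobs are cliques) and can use blobs only from an independent set of $C_6$; since the independence number of $C_6$ is $3$, any independent set of $G_p$ meets at most three pairwise-nonadjacent blobs and has size at most $3$, ruling out $4K_1$. For $C_4$-freeness and claw-freeness I would argue by analyzing which blobs the four vertices lie in and using that adjacency in $G_p$ is completely determined by blob-adjacency in $C_6$ (with all within-blob and between-consecutive-blob pairs adjacent); a $C_4$ or a claw would project to a closed walk or star structure in $C_6$ that $C_6$ does not admit, and the within-blob completeness prevents creating the needed non-adjacencies. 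I expect claw-freeness to be the most delicate case: I would fix the center $x \in B_i$, note its neighbours lie in $B_i \cup B_{i-1} \cup B_{i+1}$, and show that among any three neighbours of $x$ two must be adjacent — because $B_{i-1}$ and $B_{i+1}$ are each cliques, $B_i$ is a clique, and the only non-adjacent pairs among $N(x)$ come from $B_{i-1}$ versus $B_{i+1}$ (non-consecutive in $C_6$), so at most two mutually non-adjacent neighbours exist, which is too few to form the independent set of size $3$ that a claw requires.
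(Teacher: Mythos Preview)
Your proposal is correct and follows essentially the same approach as the paper: the paper likewise exhibits the explicit $2p$- and $3p$-colourings (via a figure), proves $4K_1$-freeness from the partition into three cliques, proves claw-freeness by observing that every vertex is bisimplicial (exactly your observation that $N(x)\subseteq B_{i-1}\cup B_i\cup B_{i+1}$ splits into two cliques), and handles $C_4$-freeness by a short case analysis on which blobs the vertices of an induced $P_3$ can occupy. Your $C_4$ argument is only sketched, but the projection-to-$C_6$ idea you indicate is exactly what is needed and goes through; your detour through perfection for $\chi(G_p)=2p$ is valid but heavier than necessary, since the explicit $2p$-colouring you begin describing already suffices.
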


\begin{proof}
See Figure \ref{fig:c6blowup} for a $2p$-colouring and frozen $3p$-colouring of $G_p$. Notice that the vertices of $G_p$ can be partitioned into 3 cliques, and so $G_p$ must be $4K_1$-free. Also notice that every vertex of $G_p$ is bisimplicial. That is, the closed neighbourhood of every vertex can be partitioned into two cliques, and so $G_p$ must be claw-free. Finally, suppose $G_p$ contains an induced $C_4$. Then $G_p$ contains an induced $P_3$, call it $P$, with vertices $w, x, y$ in order. Notice that each vertex of $P$ must belong to a distinct copy of $K_p$. But then any other vertex of $G_p$ that is adjacent to both $w$ and $x$ must also be adjacent to $y$, a contradiction.
\end{proof}

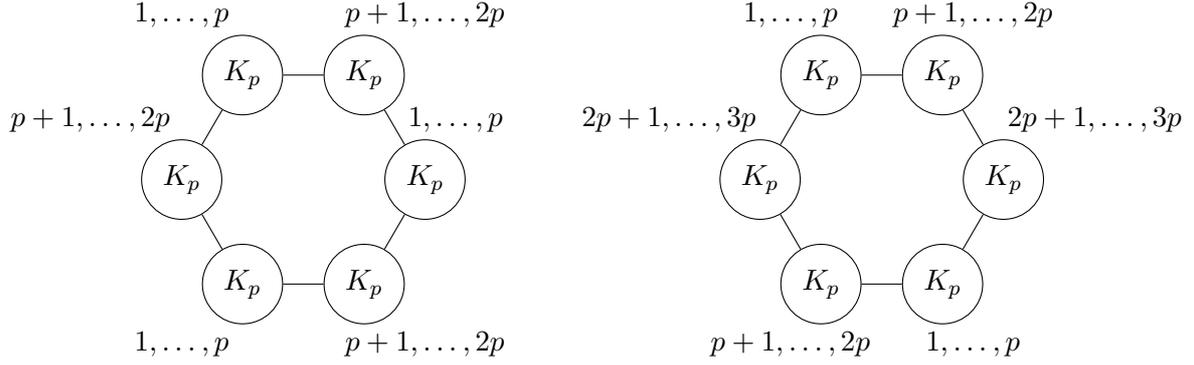
\begin{figure}
    \centering
    \begin{tikzpicture}[scale=0.4]
    \tikzstyle{blowup}=[circle, draw, fill=white, inner sep=0pt, minimum size=30pt]
    \node[blowup](1) at (4,0){$K_p$};
    \node[blowup](2) at (4*cos{60},4*sin{60}){$K_p$};
    \node[blowup](3) at (4*cos{120},4*sin{120}){$K_p$};
    \node[blowup](4) at (4*cos{180},4*sin{180}){$K_p$};
    \node[blowup](5) at (4*cos{240},4*sin{240}){$K_p$};
    \node[blowup](6) at (4*cos{300},4*sin{300}){$K_p$};
    
    \draw(1)--(2);
    \draw(2)--(3);
    \draw(3)--(4);
    \draw(4)--(5);
    \draw(5)--(6);
    \draw(6)--(1);
    
    \node[]() at (4*cos{120}-2,4*sin{120}+2){$1, \ldots, p$};
    \node[]() at (4*cos{60}+2,4*sin{60}+2){$p+1, \ldots, 2p$};
    
    \node[]() at (4*cos{180}-3,4*sin{180}+2){$p+1, \ldots, 2p$};
    \node[]() at (4+1,0+2){$1, \ldots, p$};
    
    \node[]() at (4*cos{240}-2,4*sin{240}-2){$1, \ldots, p$};
    \node[]() at (4*cos{300}+2,4*sin{300}-2){$p+1, \ldots, 2p$};
    
    \end{tikzpicture}
    \hspace{5mm}
    \begin{tikzpicture}[scale=0.4]
    \tikzstyle{blowup}=[circle, draw, fill=white, inner sep=0pt, minimum size=30pt]
    \node[blowup](1) at (4,0){$K_p$};
    \node[blowup](2) at (4*cos{60},4*sin{60}){$K_p$};
    \node[blowup](3) at (4*cos{120},4*sin{120}){$K_p$};
    \node[blowup](4) at (4*cos{180},4*sin{180}){$K_p$};
    \node[blowup](5) at (4*cos{240},4*sin{240}){$K_p$};
    \node[blowup](6) at (4*cos{300},4*sin{300}){$K_p$};
    
    \node[]() at (4*cos{120}-1,4*sin{120}+2){$1, \ldots, p$};
    \node[]() at (4*cos{60}+1,4*sin{60}+2){$p+1, \ldots, 2p$};
    
    \node[]() at (4*cos{180}-3,4*sin{180}+2){$2p+1, \ldots, 3p$};
    \node[]() at (4+3,0+2){$2p+1, \ldots, 3p$};
    
    \node[]() at (4*cos{240}-1,4*sin{240}-2){$p+1, \ldots, 2p$};
    \node[]() at (4*cos{300}+1,4*sin{300}-2){$1, \ldots, p$};
    
    \draw(1)--(2);
    \draw(2)--(3);
    \draw(3)--(4);
    \draw(4)--(5);
    \draw(5)--(6);
    \draw(6)--(1);
    \end{tikzpicture}
    \caption{A $2p$-colouring of $G_p$ and a frozen $3p$-colouring. }
    \label{fig:c6blowup}
\end{figure}

To prove Theorem \ref{thm:k4diamondpawcoclaw}, we use the family of bipartite graphs $\{B_p \mid p \ge 3$\} introduced by Cereceda, van den Heuvel, and Johnson \cite{cereceda2008}. For $p \ge 3$, the graph $B_p$ is 2-colourable, and has a frozen $p$-colouring.

\begin{lemma} \label{lem:Bp}
For $p \ge 3$, let $B_p$ be the graph obtained from the complete bipartite graph $K_{p,p}$ by deleting the edges of a perfect matching. Then $B_p$ is $(K_4$, diamond, paw, co-claw, co-diamond$)$-free, 2-colourable, and has a frozen $p$-colouring.
\end{lemma}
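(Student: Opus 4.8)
The plan is to verify each of the claimed properties of $B_p = K_{p,p}$ minus a perfect matching separately, handling the structural (forbidden-subgraph) properties first and then the colouring properties. Let the two sides of the bipartition be $A = \{a_1, \ldots, a_p\}$ and $B = \{b_1, \ldots, b_p\}$, where the deleted matching edges are exactly the pairs $a_ib_i$; thus $a_i$ is adjacent to every $b_j$ with $j \neq i$, and symmetrically for the $b$-vertices. Since $B_p$ is bipartite by construction, it is trivially $2$-colourable and contains no odd cycle.

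For the forbidden induced subgraphs, my first observation is that $B_p$ is triangle-free (being bipartite), which immediately rules out several of the graphs on the list: any graph containing a triangle as a subgraph cannot appear as an induced subgraph of a triangle-free graph. In particular $K_4$, the diamond, and the paw all contain a triangle, so $B_p$ is automatically $(K_4$, diamond, paw$)$-free. That leaves the co-claw and the co-diamond. The co-claw is $K_3 + K_1$ (a triangle plus an isolated vertex), which also contains a triangle, so triangle-freeness disposes of it as well. The only genuinely different case is the co-diamond, which is $2K_2$ together with\ldots no: the co-diamond is the complement of the diamond, namely $\overline{K_2} + K_2$ — equivalently two nonadjacent vertices plus a disjoint edge, i.e. $K_2 + 2K_1$. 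This graph is triangle-free, so I cannot dispose of it cheaply. Here I would argue directly: the co-diamond has an independent set of size $3$ (the two isolated vertices of the $2K_1$ plus\ldots) — more carefully, the co-diamond on vertices $\{w,x,y,z\}$ consists of the edge $wx$ and three isolated vertices relative to it, so it contains $3K_1$ as an induced subgraph, hence it suffices to show $B_p$ has no independent set of size $3$ that is anticomplete to an edge. Actually the cleanest route is to show $B_p$ is $(2K_2)$-free-adjacent in structure; I would check that any two independent vertices in $B_p$ have a common neighbour, which forces $B_p$ to contain no induced co-diamond since the isolated pair would need a common non-neighbour configuration.

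The main obstacle is the co-diamond case, and I would resolve it by a short direct analysis of independent sets in $B_p$. The key combinatorial fact is that for $p \geq 3$, any independent set of $B_p$ is contained in one side of the bipartition (no cross pair $a_i, b_i$ can be extended, and in fact any independent set meeting both sides has size at most\ldots); more usefully, any two vertices on opposite sides either are adjacent or form the non-adjacent pair $a_ib_i$, and $a_i, b_i$ have many common neighbours when $p \geq 3$. Concretely, to find an induced co-diamond I would need an independent set $\{y,z\}$ together with an edge $wx$ where $w,x$ are each nonadjacent to both $y$ and $z$; I would show the $(K_2 + 2K_1)$ structure cannot be realized because in $B_p$ a vertex is nonadjacent to only one vertex on the opposite side, forcing coincidences that collapse the configuration. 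This parity/counting argument, exploiting that each vertex misses exactly one neighbour on the opposite side, is where I expect the real work to lie.

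Finally, for the frozen $p$-colouring I would colour $a_i$ and $b_i$ both with colour $i$; this is proper because adjacent vertices $a_i b_j$ satisfy $i \neq j$, and it uses all $p$ colours. To see it is frozen, fix any vertex, say $a_i$ with colour $i$: its neighbours are all $b_j$ for $j \neq i$, and $b_j$ has colour $j$, so the colours appearing on the closed neighbourhood of $a_i$ are $\{i\} \cup \{j : j \neq i\} = \{1, \ldots, p\}$, all $p$ colours. Hence no recolouring of $a_i$ is possible, and by symmetry the same holds for every vertex, so the colouring is an isolated vertex of $\mathcal{R}_p(B_p)$. This last part is a routine verification once the colouring is written down, so I would present it briefly and spend the bulk of the proof on the co-diamond argument.
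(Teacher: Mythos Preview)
Your approach is correct and the frozen-colouring argument is identical to the paper's, but your treatment of the co-diamond case differs in method from the paper and is also not carried to completion.

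The paper argues by complement: $\overline{B_p}$ consists of two disjoint $p$-cliques joined by a perfect matching, and one checks directly that this graph is diamond-free (a diamond would force some vertex of one clique to have two matching-neighbours in the other). You instead propose a direct analysis inside $B_p$, looking for an induced $K_2 + 2K_1$. Your key observation --- that every vertex of $B_p$ has exactly one non-neighbour on the opposite side --- is exactly what is needed, and the argument finishes in two lines once you commit to it: if $wx$ is the edge, say $w=a_i$, $x=b_j$ with $i\neq j$, then any vertex nonadjacent to both $w$ and $x$ must be $a_j$ or $b_i$; but $a_j$ and $b_i$ are adjacent (since $i\neq j$), so the two isolated vertices of a co-diamond cannot both be chosen. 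You should write this out rather than leaving it as ``forcing coincidences that collapse the configuration.''

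Two small corrections to your exposition: the co-diamond has two isolated vertices besides the edge, not three; and it is not true that every independent set of $B_p$ lies on one side of the bipartition (each pair $\{a_i,b_i\}$ is a counterexample), though you seem to notice this mid-sentence. Neither error affects the final argument, but the meandering hurts readability. The paper's complement approach is slightly cleaner precisely because the structure of $\overline{B_p}$ is so transparent; your direct approach is equally valid and arguably more elementary, but needs to be stated without the false starts.
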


\begin{proof}
See Figure \ref{fig:frozenbipartite} for a frozen $p$-colouring of $B_p$. Since $B_p$ is bipartite, it is also $K_3$-free and so it must be ($K_4$, diamond, paw, co-claw)-free. To show that $B_p$ is co-diamond-free, we show that the complement of $B_p$ is diamond-free. The complement of $B_p$ consists of two disjoint cliques with $p$ vertices, call them $Q_1$ and $Q_2$, and the edges between $Q_1$ and $Q_2$ are a perfect matching. Suppose by contradiction that the complement of $B_p$ contains an induced diamond $D$. Let $w,x$ be the adjacent vertices of degree 3 in $D$ and let $y,z$ be the non-adjacent vertices of degree 2 in $D$. Since each vertex in $Q_i$, for $i$ = 1, 2, can be adjacent to at most one vertex in the other clique, either $x$, $w$, $y \in Q_1$ or $x$, $w$, $z \in Q_1$. Without loss of generality, let $x$, $w$, $y \in Q_1$. Since $z$ is not adjacent to $y$, $z$ must be in $Q_2$. But then $z \in Q_2$ must be adjacent to two vertices in $Q_1$, a contradiction. 
\end{proof}

\begin{figure}
\centering
\begin{tikzpicture}[scale=0.5]
\tikzstyle{vertex}=[circle, draw, fill=black, inner sep=0pt, minimum size=5pt]

    \node[vertex, label=left:1](1) at (0,4) {};
    \node[vertex, label=left:2](2) at (0,2) {};
    \node[vertex, label=left:3](3) at (0,0) {};
	\node[vertex, label=left:$p$](4) at (0,-4) {};
	\node[vertex, label=right:1](5) at (4,4) {};
	\node[vertex, label=right:2](6) at (4,2) {};
	\node[vertex, label=right:3](7) at (4,0) {};
	\node[vertex, label=right:$p$](8) at (4,-4) {};
	
	\node at (0,-2) {\textbf{\vdots}};
	\node at (4,-2) {\textbf{\vdots}};
     
    \draw(1)--(6);
    \draw(1)--(7);
    \draw(1)--(8);
    \draw(2)--(5);
    \draw(2)--(7);
    \draw(2)--(8);
    \draw(3)--(5);
    \draw(3)--(6);
    \draw(3)--(8);
    \draw(4)--(5);
    \draw(4)--(6);
    \draw(4)--(7);
    
\end{tikzpicture}
\caption{A frozen $p$-colouring of the graph $B_p$. \cite{cereceda2008}}
\label{fig:frozenbipartite}
\end{figure}
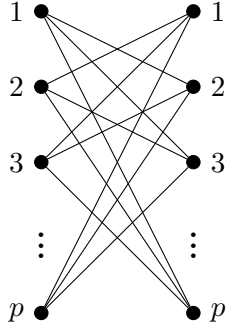

\section{Recolouring $(P_3+P_1)$-free graphs}
\label{sec:copaw}
In this section, we investigate the connectivity and diameter of $\mathcal{R}_{\ell}(G)$ for a $(P_3+P_1)$-free graph $G$. Our strategy takes advantage of the anticomponents of $G$. Since there are all possible edges between the anticomponents of a graph, in any colouring of the graph, the colours that appear on each anticomponent must be pairwise distinct. Thus if we can recolour a vertex in some anticomponent of $G$, that recolouring step would extend to a recolouring step for all of $G$.

Given two arbitrary $\ell$-colourings $\alpha$ and $\beta$ of $G$, we show how to recolour $\alpha$ into $\beta$ using the following strategy. First fix an arbitrary $\chi(G)$-colouring $\gamma$ of $G$ by finding an optimal colouring of each anticomponent of $G$. Next recolour $\alpha$ into a $\chi(G)$-colouring $\alpha'$ of $G$ by recolouring each anticomponent $A$ of $G$ so that the colour classes of $\alpha'_A$ match the colour classes of $\gamma_A$. Similarly recolour $\beta$ into a $\chi(G)$-colouring $\beta'$ of $G$ by recolouring each anticomponent $A$ of $G$ so that the colour classes of $\beta'_A$ match the colour classes of $\gamma_A$. Once we have the colourings $\alpha'$ and $\beta'$, we use the following Renaming Lemma.

\begin{lemma}[Renaming Lemma \cite{bonamy2018}]
\label{lem:recolour}
Let $\alpha'$ and $\beta'$ be two $k$-colourings of $G$ that induce the same partition of vertices into colour classes and let $\ell \ge k+1$. Then $\alpha'$ can be recoloured into $\beta'$ in $\mathcal{R}_{\ell}(G)$ by recolouring each vertex at most 2 times.
\end{lemma}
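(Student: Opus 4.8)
The plan is to exploit the fact that, because $\alpha'$ and $\beta'$ induce the same partition $\{S_1,\dots,S_m\}$ of $V(G)$ into colour classes, they differ only by a relabelling of colours: each class $S_j$ receives a single colour $a_j := \alpha'(S_j)$ under $\alpha'$ and a single colour $b_j := \beta'(S_j)$ under $\beta'$, where the $a_j$ are pairwise distinct and the $b_j$ are pairwise distinct. Two observations drive the whole argument. First, since $S_j$ is a colour class it is an independent set, so I may recolour all of its vertices from one colour to another, one vertex at a time, and every intermediate colouring is proper provided the target colour is absent from the neighbourhood of $S_j$. Second, since every colour is used by at most one class, the instant I move an entire class off a colour $c$, the colour $c$ becomes free on all of $V(G)$. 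Thus I will always recolour whole classes as atomic macro-steps and keep track of which colours are currently unused.

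To organise the relabelling, I form the directed graph $D$ on the colour set $\{1,\dots,k\}$ with an arc $a_j \to b_j$ for each class $S_j$. By distinctness of the $a_j$ and of the $b_j$, every colour has in-degree and out-degree at most $1$ in $D$, so $D$ is a disjoint union of directed paths and directed cycles, and I handle these components independently. For a path $c_0 \to c_1 \to \cdots \to c_t$, the endpoint colour $c_t$ has out-degree $0$, meaning no class carries colour $c_t$ under $\alpha'$, so $c_t$ is already free. I then process the arcs from the sink backwards: recolour the class on the arc $c_{t-1}\to c_t$ into $c_t$ (legal, since $c_t$ is free), which frees $c_{t-1}$; next recolour the class on $c_{t-2}\to c_{t-1}$ into $c_{t-1}$; and so on down to the arc $c_0 \to c_1$. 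Every class on the path is recoloured exactly once, and no spare colour is needed.

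The only place a spare colour is required, and the crux of the lemma, is a directed cycle $c_1 \to c_2 \to \cdots \to c_r \to c_1$, in which every colour $c_i$ is occupied, so nothing in the cycle is initially free. Here I use the colour $k+1$, available since $\ell \ge k+1$: first \emph{park} the class sitting on $c_r$ onto $k+1$, which frees $c_r$; then cascade as in the path case, recolouring the class on $c_{r-1}$ into $c_r$, the class on $c_{r-2}$ into $c_{r-1}$, and so on until the class originally on $c_1$ moves into $c_2$, which frees $c_1$; finally drop the parked class from $k+1$ into $c_1$. Every class in the cycle is recoloured once except the parked one, which is recoloured twice, so each vertex is recoloured at most twice overall. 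Since colour $k+1$ is unused at the start and is vacated again at the end of processing each cycle, the single spare colour can be reused across all cycles, which is precisely why $\ell \ge k+1$ suffices. The main thing to verify carefully is this cycle case: that the park-then-cascade procedure keeps every intermediate colouring proper and never exceeds two recolourings per vertex. The path case and the legitimacy of the per-class macro-steps are then routine.
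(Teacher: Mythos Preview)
The paper does not give its own proof of this lemma; it is quoted with a citation to \cite{bonamy2018}. Your argument is correct and is essentially the standard one from that reference: model the relabelling as a functional digraph on colours, cascade along each directed path from its sink, and break each directed cycle by parking one class on the spare colour $k{+}1$ before cascading, so that every vertex is recoloured at most twice.
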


Olariu \cite{olariu1988} proved the following useful result characterizing the anticomponents of a $(P_3+P_1)$-free graph.

\begin{theorem}[\cite{olariu1988}]
\label{pawfree}
Each connected component of a paw-free graph is either $K_3$-free or $(P_2+P_1)$-free.
\end{theorem}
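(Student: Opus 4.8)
The plan is to prove the equivalent dichotomy component-by-component: it suffices to show that a \emph{connected} paw-free graph $G$ that contains a triangle must be $(P_2+P_1)$-free, since a component with no triangle is trivially $K_3$-free. I would first record the reformulation that a graph is $(P_2+P_1)$-free if and only if it is complete multipartite (equivalently, its complement is a disjoint union of cliques, i.e.\ non-adjacency is an equivalence relation), since $\overline{P_2+P_1}=P_3$. Thus the goal becomes: no connected paw-free graph with a triangle contains an induced $P_2+P_1$, that is, three vertices $x,y,z$ with $xy\in E(G)$ and $z$ anticomplete to $\{x,y\}$.

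The single engine driving everything is an \emph{attachment lemma}: for any triangle $T=\{a,b,c\}$ and any vertex $v\notin T$, paw-freeness forces $|N(v)\cap T|\in\{0,2,3\}$, because $|N(v)\cap T|=1$ makes $T\cup\{v\}$ an induced paw. (The value $2$ gives a diamond and $3$ gives a $K_4$, both allowed.)

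Using this, I would carry out two steps. First, \emph{promote the hypothesis} that $G$ has a triangle to the statement that every edge of $G$ lies in a triangle. If $x$ lies in a triangle $\{x,p,q\}$ and $y$ is any neighbour of $x$, the attachment lemma applied to $v=y$ gives $|N(y)\cap\{x,p,q\}|\ge 2$, so $y$ is adjacent to one of $p,q$ and hence $xy$ lies in a triangle (and $y$ itself lies in a triangle). Since the set of vertices lying in some triangle is nonempty and closed under taking neighbours, connectivity makes it all of $V(G)$, and the same argument then shows every edge lies in a triangle. Second, \emph{derive the contradiction}. Suppose an induced $P_2+P_1$ exists with edge $xy$ and far vertex $z$. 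Embed $xy$ in a triangle $\{x,y,d\}$; the attachment lemma applied to $v=z$ (which misses $x$ and $y$) forces $z$ to miss $d$ as well, so $z$ is anticomplete to the whole triangle $S=\{x,y,d\}$. Now take a shortest path $z=v_0,v_1,\dots,v_m$ from $z$ to $S$, with $v_m\in S$. Shortestness gives $m\ge 2$ and makes $v_{m-2}$ anticomplete to $S$ while $v_{m-1}$ is adjacent to $S$; by the attachment lemma $v_{m-1}$ meets $S$ in two adjacent vertices $\alpha,\beta$, so $T'=\{v_{m-1},\alpha,\beta\}$ is a triangle. But $v_{m-2}$ is adjacent to $v_{m-1}$ and to neither $\alpha$ nor $\beta$, so $|N(v_{m-2})\cap T'|=1$ and $\{v_{m-2},v_{m-1},\alpha,\beta\}$ induces a paw --- the desired contradiction.

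The main obstacle is not the final paw but the two set-up moves that make it appear: realizing that one should first upgrade a single triangle to the statement that \emph{every} edge is triangular (so that the edge of the putative $P_2+P_1$ sits in a triangle), and then choosing a \emph{shortest} path from the far vertex to a triangle, so that the penultimate vertex is forced to attach to a triangle in exactly one vertex. Once these are arranged, the attachment lemma does all the remaining work, and the rest is routine bookkeeping about which vertices are (non-)adjacent.
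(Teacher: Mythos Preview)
Your argument is correct. The attachment observation $|N(v)\cap T|\neq 1$ is exactly the right tool, the promotion of a single triangle to ``every edge lies in a triangle'' is sound, and the shortest-path trick in Step~2 cleanly produces a paw on $\{v_{m-2},v_{m-1},\alpha,\beta\}$. One tiny bookkeeping point you implicitly use but could state: $v_{m-1}\notin S$ by shortestness, so $v_{m-1}\neq\alpha,\beta$ and $T'$ really is a triangle on three distinct vertices.

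As for comparison with the paper: there is nothing to compare. The paper does not prove this theorem; it merely quotes Olariu's result \cite{olariu1988} as a black box in order to decompose the anticomponents of a $(P_3+P_1)$-free graph. Your write-up therefore supplies a self-contained proof of a statement the paper only cites, and it is in the spirit of Olariu's original short argument.
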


Therefore each anticomponent of a $(P_3+P_1)$-free graph is either $3K_1$-free or $P_3$-free. The last author proved the following theorem on recolouring $3K_1$-free graphs.

\begin{theorem}[\cite{merkel2021}]
\label{3k1free}
Every $3K_1$-free graph is $\ell$-mixing and the $\ell$-recolouring diameter is at most $4n$.
\end{theorem}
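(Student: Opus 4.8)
The plan is to use the strong structural restriction imposed by $3K_1$-freeness. A graph $G$ is $3K_1$-free precisely when its complement $\overline{G}$ is triangle-free, equivalently when every independent set of $G$ has at most two vertices. Consequently, in every proper colouring of $G$ each colour class has size one or two, and a size-two class is exactly a non-edge of $G$, that is, an edge of $\overline{G}$. Thus an $\ell$-colouring of $G$ amounts to a matching $M$ of $\overline{G}$ (recording the size-two classes) together with an assignment of distinct colours to the parts; it uses $n-|M|$ colours, so the minimum number of colours is attained at a maximum matching and $\chi(G)=n-\nu(\overline{G})$, where $\nu(\overline{G})$ denotes the matching number of $\overline{G}$.

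I would fix, once and for all, a canonical colouring $\gamma$ obtained from a maximum matching $M^\ast$ of $\overline{G}$; since $\gamma$ uses exactly $\chi(G)=n-\nu(\overline{G})$ colours and $\ell\ge\chi(G)+1$, at least one colour of $\{1,\dots,\ell\}$ is unused by $\gamma$. It then suffices to show that every $\ell$-colouring of $G$ can be transformed into $\gamma$ by recolouring each vertex at most twice, i.e. in at most $2n$ steps: connectivity of $\mathcal{R}_{\ell}(G)$ is immediate, and for any two $\ell$-colourings $\alpha$ and $\beta$ the concatenation $\alpha\to\gamma$ followed by the reverse of $\beta\to\gamma$ yields a walk of length at most $4n$, giving the diameter bound.

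To reach $\gamma$ from an arbitrary $\ell$-colouring I would proceed in two stages. First I morph the underlying matching $M_\alpha$ into $M^\ast$: the symmetric difference $M_\alpha\bigtriangleup M^\ast$ decomposes into vertex-disjoint alternating paths and even alternating cycles of $\overline{G}$, and each component is resolved by ``rotating'' colours along it so that the $M_\alpha$-pairs are replaced by the $M^\ast$-pairs. Whenever the current matching is not yet maximum, an augmenting path merges two singleton classes, which lowers the number of colours in use and frees a colour; once the matching is maximum the bound $\ell\ge\chi(G)+1$ guarantees a spare colour that serves as a moving buffer for shifting colours one step at a time along paths and around cycles. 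In the second stage the partition agrees with that of $\gamma$, and the Renaming Lemma (Lemma~\ref{lem:recolour}), applicable because the common colouring uses $k=\chi(G)$ colours and $\ell\ge k+1$, recolours each vertex at most twice to finish. The whole argument can be organised as a single ordered sweep that sends each vertex to its colour in $\gamma$, temporarily parking an obstructing vertex on the spare colour, so that every vertex is recoloured at most twice.

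The main obstacle is the validity and cost of these rotations, and this is exactly where the density of $G$ makes the problem delicate. Although $\overline{G}$ is only triangle-free, $G$ itself may be extremely dense — in $G=\overline{C_{2t}}$ each vertex is non-adjacent to just two others — so the neighbourhood of a vertex usually already contains almost all colours and a naive recolouring to a neighbour's colour is blocked. The crux is therefore to show that, using the single spare colour supplied by $\ell\ge\chi(G)+1$ as a buffer, every alternating path and (especially) every long alternating cycle can be rotated through proper intermediate colourings while respecting a budget of two recolourings per vertex. Establishing that this budget is never exceeded, so that reaching $\gamma$ costs at most $2n$ steps, is the heart of the proof; the reduction to $\gamma$ and the appeal to the Renaming Lemma are then routine.
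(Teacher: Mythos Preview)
This theorem is not proved in the present paper but quoted from \cite{merkel2021}; the paper does, however, record the decisive intermediate step as Lemma~\ref{lem:3k1}: starting from any $\ell$-colouring of a $3K_1$-free graph, one can reach a colouring with the \emph{same colour classes} as a fixed optimal colouring $\gamma$ by recolouring each vertex at most once. From this the $4n$ bound follows via the route $\alpha\to\alpha'\to\beta'\leftarrow\beta$, where $\alpha'$ and $\beta'$ both carry the partition of $\gamma$: Lemma~\ref{lem:3k1} costs at most $n$ steps on each end and the Renaming Lemma at most $2n$ in the middle.

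Your plan shares the matching-in-$\overline{G}$ picture and the idea of a canonical target, but it aims at the labelled colouring $\gamma$ itself rather than merely its partition, and this is where the arithmetic fails to close. In your two-stage scheme the Renaming Lemma alone already spends up to two moves per vertex, so the matching-morphing stage would have to be free in order to reach $\gamma$ within $2n$ steps; and your fallback ``single ordered sweep'' that would merge both stages within a budget of two moves per vertex is exactly the assertion you flag as the unproven crux. The observation you are missing is that targeting only the partition of $\gamma$ relaxes the task enough to cost just one move per vertex: when rotating colours around an alternating cycle of $M_\alpha\bigtriangleup M^\ast$ using the spare colour, you may simply leave the spare on the final pair (which frees one of the original colours to serve as the spare for the next component), so no vertex need be touched twice. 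That single-move bound is the content of Lemma~\ref{lem:3k1} and is what delivers $4n$; routing both $\alpha$ and $\beta$ all the way to $\gamma$ is unnecessary and, as you yourself note, is precisely where your argument becomes delicate.
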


The proof of Theorem \ref{3k1free} uses the following lemma which we will use in the proof of Theorem \ref{thm:copaw}. 

\begin{lemma}[\cite{merkel2021}]
\label{lem:3k1}
Let $\gamma$ be a $\chi(G)$-colouring of a $3K_1$-free graph $G$. Any $\ell$-colouring of $G$ can be recoloured into a $\chi(G)$-colouring $\gamma'$ such that the colour classes of $\gamma'$ match the colour classes of $\gamma$ by recolouring each vertex at most once.
\end{lemma}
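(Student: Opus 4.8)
The plan is to exploit the defining property of a $3K_1$-free graph: $G$ has no independent set of size $3$, so every colour class of every proper colouring of $G$ has at most two vertices. In particular the target colouring $\gamma$ partitions $V(G)$ into classes $C_1,\dots,C_m$ with $m=\chi(G)$, each of size $1$ or $2$, and since $\ell\ge\chi(G)+1$ we always have at least one spare colour to work with. Starting from an arbitrary $\ell$-colouring, I want to produce a proper colouring whose colour classes are exactly the $C_i$, changing the colour of each vertex at most once; note that matching the partition is all that is required, so I do not need to control the final labels precisely.

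The structural engine is the following consequence of having no $3K_1$: if $\{u,v\}$ is a non-adjacent pair, then every other vertex is adjacent to $u$ or to $v$, so $N(u)\cup N(v)=V(G)\setminus\{u,v\}$. Hence a colour $c$ can be assigned to both $u$ and $v$ (making $\{u,v\}$ a colour class) precisely when $c$ does not appear on $V(G)\setminus\{u,v\}$, and when such a $c$ is available the two recolourings can be performed safely in either order, each vertex moving only once. I would process the target classes one at a time, finalising a class and never touching its vertices again. For a target pair $C_i=\{u,v\}$ I recolour both $u$ and $v$ to a common colour that is free in the current colouring; a target singleton needs no work unless its vertex currently shares a colour with a vertex whose target class is different, in which case it is separated off by the same kind of free-colour move. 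Throughout, distinct finalised classes must receive distinct colours, which is possible because there are $m$ classes and at least $m+1$ colours.

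The main obstacle is ordering the classes so that no deadlock occurs and the single spare colour is never exhausted: when $\ell=\chi(G)+1$ it can happen that, at the outset, every colour already appears on $V(G)\setminus\{u,v\}$ for a given pair, so that pair cannot be finalised first. I would resolve this with a dependency argument — translating ``which class must be recoloured before which'' into a digraph (equivalently, tracking augmenting paths in the complement of $G$, whose maximum matchings correspond to the optimal colourings of $G$) — and show, using the domination property $N(u)\cup N(v)=V(G)\setminus\{u,v\}$, that some not-yet-finalised class can always be handled next, so that no cyclic dependency survives. The spare colour serves as a temporary buffer to break any remaining conflict while keeping every vertex recoloured at most once, which completes the argument.
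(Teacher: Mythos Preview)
The paper does not contain its own proof of this lemma: it is quoted from \cite{merkel2021} and used as a black box in the proof of Theorem~\ref{thm:copaw}. So there is no in-paper argument to compare your proposal against.

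On the merits of your outline: the structural observations are correct and are the right starting point --- in a $3K_1$-free graph every colour class has size at most two, and any non-adjacent pair $\{u,v\}$ satisfies $N(u)\cup N(v)=V(G)\setminus\{u,v\}$, so a colour can be placed on both of $u,v$ exactly when it is absent from the rest of the graph. Your scheme of finalising one target class at a time also guarantees the ``each vertex recoloured at most once'' bound.

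The genuine gap is the part you flag yourself. You correctly observe that when $\ell=\chi(G)+1$ it may happen that, for a given target pair $\{u,v\}$, every colour already appears on $V(G)\setminus\{u,v\}$, so that pair cannot be processed first; you then \emph{assert} that a dependency/augmenting-path argument shows ``some not-yet-finalised class can always be handled next, so that no cyclic dependency survives.'' That assertion is the entire content of the lemma, and you have not proved it. Concretely, you would need to show: if every remaining target class is currently blocked, then a contradiction follows. This requires an actual argument --- a counting argument over colours and classes, or a matching/alternating-path argument in $\overline{G}$ using that $\gamma$ corresponds to a \emph{maximum} matching there --- and the domination property alone does not deliver it. Until that step is written out, the proposal is a plausible plan rather than a proof.
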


We are now ready to prove Theorem \ref{thm:copaw}.

\begin{proof}[Proof of Theorem \ref{thm:copaw}]
Let $G$ be a $k$-colourable $(P_3+P_1)$-free graph and let $\alpha$ and $\beta$ be two $\ell$-colourings of $G$ with $\ell \ge k+1$. Fix a $\chi(G)$-colouring $\gamma$ of $G$. We note that this colouring can be found in polynomial time by optimally colouring each anticomponent of $G$ \cite{kral2001}.

Let $A_1, A_2, \ldots, A_p$ be the anticomponents of $G$. 
Since the anticomponents of any graph are pairwise complete, the colours used on each anticomponent are pairwise distinct, and thus $\chi(G) = \sum_{1 \le i \le p}\chi(A_i)$. We proceed by recolouring each anticomponent $A_i$ of $G$ so that the colouring of $A_i$ matches the colour classes of $A_i$ in $\gamma$. The anticomponent that is selected to be recoloured is determined based on the following claim. 

\begin{claim}
\label{claim1}
In any $\ell$-colouring $\alpha$ of $G$, either some colour does not appear in $\alpha$ or there is some anticomponent $A$ of $G$ such that $\alpha$ uses at least $\chi(A)$+1 colours on $A$.
\end{claim}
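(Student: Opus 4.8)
The plan is to prove the contrapositive. Suppose that every colour $1, 2, \ldots, \ell$ appears somewhere in the $\ell$-colouring $\alpha$, and that no anticomponent $A$ uses more than $\chi(A)$ colours; I will derive a contradiction. The key quantitative observation is the identity $\chi(G) = \sum_{i=1}^{p} \chi(A_i)$, which was already established just before the claim from the fact that distinct anticomponents are pairwise complete (so their colour sets are disjoint in every colouring).

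First I would count the total number of colours used by $\alpha$ across all anticomponents. Since the anticomponents $A_1, \ldots, A_p$ partition $V(G)$ and are pairwise complete, the colour sets $\alpha(A_1), \ldots, \alpha(A_p)$ are pairwise disjoint, so the total number of colours appearing in $\alpha$ is exactly $\sum_{i=1}^{p} |\alpha(A_i)|$. By the assumption that every colour appears, this sum equals $\ell$. On the other hand, the assumption that no anticomponent uses more than its chromatic number of colours gives $|\alpha(A_i)| \le \chi(A_i)$ for each $i$. Summing these inequalities yields
\[
\ell = \sum_{i=1}^{p} |\alpha(A_i)| \le \sum_{i=1}^{p} \chi(A_i) = \chi(G) = k.
\]
This contradicts the standing hypothesis $\ell \ge k+1$. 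Hence at least one of the two alternatives in the claim must hold.

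The argument is short and the only real content is bookkeeping, so I do not anticipate a genuine obstacle; the main thing to get right is the pairwise-disjointness of the colour sets across anticomponents, which is what lets me add the per-anticomponent counts into a global count. I would make sure to state explicitly that it is the completeness between anticomponents (equivalently, that each $A_i$ is a separate component of the complement) that forces $\alpha(A_i) \cap \alpha(A_j) = \emptyset$ for $i \ne j$, since this is the step that converts the local bounds $|\alpha(A_i)| \le \chi(A_i)$ into the fatal inequality $\ell \le \chi(G)$.
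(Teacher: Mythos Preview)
Your proof is correct and takes essentially the same approach as the paper: assume each anticomponent $A_i$ receives at most $\chi(A_i)$ colours, use the disjointness of the colour sets across anticomponents together with $\chi(G)=\sum_i \chi(A_i)$ to conclude that at most $\chi(G)<\ell$ colours appear, and finish. The paper phrases this as ``if the second alternative fails then the first holds'' rather than as a contrapositive leading to a contradiction, but the argument is the same.
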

\textit{Proof of claim}: 
Suppose $\alpha$ colours each anticomponent $A_i$ of $G$ with $\chi(A_i)$ colours. Since 
$\chi(G) = \sum_{1 \le i \le p}\chi(A_i)$, only $\chi(G)$ colours appear in $\alpha$ and since $\ell \ge \chi(G)+1$, some colour does not appear in $\alpha$. 
This completes the proof of the Claim 1.\\

If some colour $c$ is not being used, we take some anticomponent $A_i$ of $G$ that has not been recoloured. Recolour $A_i$ as described below depending on whether $A_i$ is $P_3$-free or $3K_1$-free using $c$ together with the $\chi(A_i)$ colours appearing on $A_i$.  

Now suppose that there is some anticomponent $A_i$ where the current colouring uses at least $\chi(A_i)+1$ colours on $A_i$. If $A_i$ is $P_3$-free then since each component of $A_i$ is a clique, we use the Renaming Lemma to recolour each component of $A_i$ to match the colour classes of $\gamma$ by recolouring each vertex at most twice. If $A_i$ is $3K_1$-free, then we use Lemma \ref{lem:3k1} to recolour the current colouring of $A_i$ to match the colour classes of $\gamma$.

We have recoloured $\alpha$ into a $\chi(G)$-colouring $\alpha'$ whose colour classes correspond to the colour classes of $\gamma$ by recolouring each vertex at most twice. Similarly, we recolour $\beta$ into a $\chi(G)$-colouring $\beta'$ whose colour classes correspond to those of $\gamma$ by recolouring each vertex at most twice. By the Renaming Lemma, $\alpha'$ can be recoloured into $\beta'$ by recolouring each vertex at most twice. Thus, we have found a path from $\alpha$ to $\beta$ in $\mathcal{R}_\ell(G)$ by recolouring each vertex at most 6 times.
\end{proof}

\section{Proof of Theorem \ref{thm:main}}
\label{sec:main}

Theorem \ref{thm:main} follows from the following lemma.

\begin{lemma} \label{lem:No5} 
There is no graph $H$ on five vertices such that every $H$-free graph $G$ is  ($\chi(G)$+1)-mixing.
\end{lemma}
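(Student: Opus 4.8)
The plan is to reduce the lemma to a small finite case analysis by exploiting the non-mixing graphs already constructed in Section \ref{sec:frozen}. The key observation is monotonicity under induced subgraphs: if $F$ is an induced subgraph of $H$, then every $F$-free graph is $H$-free, so any $F$-free graph that is not $(\chi+1)$-mixing is simultaneously $H$-free and not $(\chi+1)$-mixing. Consequently, to prove the lemma it suffices to show that every five-vertex graph $H$ either contains, as an induced subgraph, one of the nine four-vertex graphs $4K_1$, $C_4$, claw, $K_4$, diamond, paw, co-claw, co-diamond, $2K_2$ for which a non-mixing witness is available, or is isomorphic to $C_5$, which I will treat directly.

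First I would assemble the witnesses. Setting $p=1$ in Lemma \ref{lem:Gpfrozen} yields $G_1 = C_6$, which is $(4K_1, C_4, \text{claw})$-free, is $2$-colourable, and has a frozen $3$-colouring, so $C_6$ is not $(\chi+1)$-mixing. Setting $p=3$ in Lemma \ref{lem:Bp} yields $B_3 = C_6$ as well, and shows in addition that $C_6$ is $(K_4, \text{diamond}, \text{paw}, \text{co-claw}, \text{co-diamond})$-free. Thus the single graph $C_6$ is free of eight of the nine graphs above and is not $(\chi+1)$-mixing; being bipartite, $C_6$ is also $C_5$-free. For the ninth graph $2K_2$, the construction of Feghali and Merkel \cite{feghali2021} with $p=1$ provides a $2K_2$-free graph with a frozen $(\chi+1)$-colouring, hence one that is not $(\chi+1)$-mixing. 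I take the smallest members of each family precisely so that the frozen colouring occurs at exactly $\chi+1$ colours, which is what the lemma demands.

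The substantive step is the structural claim that $C_5$ is the only five-vertex graph avoiding all nine forbidden graphs, and this is where the real work lies. Assuming $H$ avoids them all, I would argue: $H$ is triangle-free, since a triangle together with any fourth vertex induces $K_4$, a diamond, a paw, or a co-claw; being triangle-free and claw-free then forces $\Delta(H) \le 2$, because a vertex of degree three would have three pairwise non-adjacent neighbours and so lie in a claw; hence $H$ is a disjoint union of paths and cycles, and triangle-freeness together with $C_4$-freeness forces every cycle to have length at least five. On five vertices this leaves only $C_5$ or a linear forest. Finally I would dispatch the finitely many linear forests on five vertices: those of independence number at least four contain $4K_1$, the forests $P_5$, $P_3+P_2$, and $2K_2+P_1$ contain an induced $2K_2$, and the remaining forest $P_4+P_1$ contains an induced co-diamond. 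Thus every linear forest is excluded and $H \cong C_5$.

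Combining the two ingredients completes the proof: if a five-vertex $H$ is not $C_5$, then it contains one of the nine forbidden graphs $F$, and the corresponding witness is $F$-free, hence $H$-free, and not $(\chi+1)$-mixing; while if $H \cong C_5$, then $C_6$ is $C_5$-free and not $(\chi+1)$-mixing. I expect the degree/cycle structural reduction and the finite check of the linear forests to be the only delicate parts, and even these are routine; the one subtlety worth flagging is the need to realise non-mixing at exactly $\chi+1$ colours rather than merely at some larger number of colours, which is exactly why I use the smallest members $G_1$, $B_3$, and the $p=1$ instance of the Feghali–Merkel family.
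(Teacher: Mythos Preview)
Your proof is correct and uses the same two witnesses as the paper---$C_6$ (which you identify as both $G_1$ and $B_3$) and the Feghali--Merkel $2K_2$-free construction with $p=1$---but your case analysis is organised differently. The paper partitions the thirty-four five-vertex graphs directly into four groups (non-bipartite; bipartite containing a co-diamond; $P_5$; and the five remaining graphs $5K_1$, $K_{1,4}$, claw$+K_1$, $K_{2,3}$, banner) and assigns a witness to each group. You instead invoke monotonicity to reduce to four-vertex induced subgraphs and prove the structural fact that $C_5$ is the \emph{unique} five-vertex graph all of whose four-vertex induced subgraphs lie in $\{P_4,\,P_3+P_1\}$; this is a clean characterisation not stated in the paper. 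Your approach buys a tidier bookkeeping (one graph $C_6$ handles everything except the $2K_2$ case) and makes explicit why the frozen colouring lands at exactly $\chi+1$, a point the paper leaves implicit when it cites $G_p$ and $B_p$ generically; the paper's approach avoids the short structural argument by brute enumeration. Both are valid, and the underlying content is the same.
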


\begin{proof}
There are 34 graphs on 5 vertices. We partition them into four groups: Group 1 consists of the 21 graphs which are not bipartite. Group 2 consists of the 7 graphs which are bipartite and contain the co-diamond. Group 3 consists of $P_5$. Group 4 consists of the remaining 5 graphs, which are $5K_1$, $K_{1,4}$, claw$+K_1$, $K_{2,3}$ and the banner, where the banner consists of the cycle $C_4$ on four vertices together with a vertex joined to exactly one vertex of the $C_4$.

The graph $B_p$ is bipartite and, as mentioned in Lemma \ref{lem:Bp}, is    co-diamond-free, and thus is $H$-free for any non-bipartite graph $H$ and for any graph $H$ containing the co-diamond. By Lemma \ref{lem:Bp}, $B_p$ has a frozen $p$-colouring, for every $p \ge 3$.

Theorem \ref{thm:p5free} \cite{feghali2021} says that not every $P_5$-free graph is ($\chi$+1)-mixing.

Lemma \ref{lem:Gpfrozen} says that there are frozen colourings of the graph $G_p$ which is $(4K_1$, $C_4$, claw$)$-free, and thus also ($5K_1$, $K_{1,4}$, claw$+K_1$, $K_{2,3}$, banner)-free. Thus, for every  graph $H$ in Group 4, there is an $H$-free graph which is not ($\chi$+1)-mixing. 




\end{proof}

\section{Recolouring $(P_5, C_4)$-free graphs}
\label{sec:p5c4}

A \textit{blow-up} of a graph $G$ with vertices $v_1$, $v_2, \dots, v_{n}$ is any graph $H$ that can be obtained by substituting a clique $A_i$ for $v_i$ in $G$, one at a time for all $i \in [n]$. A \textit{clique cutset} $Q$ of a graph $G$ is a clique in $G$ such that $G$-$Q$ has more components than $G$. 

A \textit{split graph} is one whose vertex-set can be partitioned into a clique and an independent set. Equivalently, split graphs are the class of  $(2K_2,C_4,C_5)$-free graphs or the class of graphs which are both chordal and cochordal. Bl\'{a}zsik et al \cite{zoltan1993} gave the following characterization of $(2K_2,C_4)$-free graphs. 

\begin{theorem}[\cite{zoltan1993}]
\label{2K2C4free}
A graph $G$ is $(2K_2,C_4)$-free if and only if its vertex-set can be partitioned into three possibly empty sets, $Q,C$ and $I$ such that $Q$ is a complete graph, $I$ is an independent set and $C$ (if nonempty) induces a $C_5$, and $C$ is complete to $Q$ and anticomplete to $I$.
\end{theorem}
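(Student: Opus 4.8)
The plan is to prove both implications of this characterization, treating the ``structure $\Rightarrow$ forbidden-subgraph-free'' direction as routine and the forward direction as the substantive one. A useful opening observation is that the class is closed under complementation, since $\overline{2K_2}=C_4$ and $\overline{C_4}=2K_2$. Throughout I would fix an induced $C_5$ whenever one exists, and I would lean on the classical theorem of F\"oldes and Hammer that a graph is split (that is, $V$ partitions into a clique and an independent set) if and only if it is $(2K_2,C_4,C_5)$-free; this disposes of the degenerate case $C=\emptyset$ immediately.

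For the reverse direction, suppose $V(G)=Q\cup C\cup I$ has the stated form. If $C=\emptyset$ then $G$ is split and hence $(2K_2,C_4)$-free, so assume $C$ induces a $C_5$. To check that $G$ is $2K_2$-free, I would observe that every edge of $G$ has an endpoint in $Q\cup C$, that any vertex of $Q$ is adjacent to every other vertex of $Q\cup C$ (since $Q$ is a clique complete to $C$), and that $C\cong C_5$ is itself $2K_2$-free; these three facts force any two disjoint edges to be joined, ruling out $2K_2$. To check that $G$ is $C_4$-free, I would first argue that no vertex of an induced $C_4$ can lie in $I$: such a vertex would have both of its $C_4$-neighbours in $Q$, and those two neighbours, lying in the clique $Q$, would be adjacent, contradicting that they are opposite corners of a $C_4$. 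Hence all four vertices lie in $Q\cup C$, and then a short check over how they split between $Q$ and $C$ (using that three vertices of $Q$ give a triangle, that a $Q$-vertex is complete to $C$, and that $C_5$ is $C_4$-free) leaves no room for an induced $C_4$.

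For the forward direction, assume $G$ is $(2K_2,C_4)$-free. If $G$ has no induced $C_5$ it is split by F\"oldes--Hammer, giving the partition with $C=\emptyset$. Otherwise fix an induced $C_5$ on $c_1c_2c_3c_4c_5$ and let $C$ be its vertex set. The heart of the argument is the claim that every vertex $v\notin C$ is either complete or anticomplete to $C$. I would prove this by examining the trace $N(v)\cap C$ up to the dihedral symmetry of the cycle: if $v$ has exactly one neighbour in $C$ then $v$ together with the opposite cycle edge forms a $2K_2$; two adjacent neighbours likewise expose a $2K_2$ (using the correctly chosen incident cycle edge), while two neighbours at distance two yield an induced $C_4$ through the vertex between them. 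The traces of size three and four are handled by complementation: since the class is complement-closed and $\overline{C_5}=C_5$, a size-$k$ trace in $G$ becomes a size-$(5-k)$ trace in $\overline{G}$, reducing these to the size-two and size-one cases. Only the empty and full traces survive, proving the claim. Setting $Q=\{v\notin C: v\text{ complete to }C\}$ and $I=\{v\notin C: v\text{ anticomplete to }C\}$ then yields the partition, and it remains to verify that $Q$ is a clique and $I$ independent: two nonadjacent vertices of $Q$ together with two nonadjacent cycle vertices would form an induced $C_4$, while two adjacent vertices of $I$ together with a cycle edge would form a $2K_2$.

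The main obstacle is the neighbourhood-trace case analysis in the forward direction: one must confirm that every ``mixed'' attachment of an outside vertex to the fixed $C_5$ creates either a $2K_2$ or an induced $C_4$. The dihedral symmetry of $C_5$ reduces the genuinely distinct cases to a handful, and the complementation symmetry of the forbidden pair folds the large traces onto the small ones, so the bookkeeping is manageable; the only care needed is to select, in each surviving case, the specific pair of cycle vertices that exhibits the forbidden configuration.
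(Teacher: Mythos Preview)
The paper does not prove this theorem itself; it is quoted from Bl\'azsik, Hujter, Pluh\'ar and Tuza and then used as a black box in the proof of Theorem~\ref{thm:2k2c4}. There is therefore no proof in the paper to compare your argument against.

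That said, your proposal is a correct and standard proof of the characterization. The F\"oldes--Hammer split-graph theorem cleanly disposes of the $C_5$-free case; the trace analysis on a fixed induced $C_5$---with the complement-closure of the class and the self-complementarity of $C_5$ folding the large traces onto the small ones---correctly forces every outside vertex to be complete or anticomplete to $C$; and the final verifications that $Q$ is a clique (else an induced $C_4$ on two $Q$-vertices and two nonadjacent cycle vertices) and that $I$ is independent (else a $2K_2$ with any cycle edge) go through exactly as you indicate.
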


First we consider the subclass $(2K_2,C_4)$-free graphs and prove Theorem \ref{thm:2k2c4}.

\begin{proof}[Proof of Theorem \ref{thm:2k2c4}]
Let $\gamma_{1}$ and $\gamma_{2}$ be any two colourings of a $(2K_2,C_4)$-free graph $G$. We prove the theorem by providing a path between the two colourings of length at most $4n$. First we obtain two colourings $\alpha$ and $\beta$ from $\gamma_{1}$ and $\gamma_{2}$, respectively, such that $\alpha$ and $\beta$ induce the same partition of the vertices into colour classes. We consider two cases. \\
\textit{Case 1}: When $G$ has an induced $C_5$. By 
Theorem \ref{2K2C4free}, $V(G)$ can be partitioned into three subsets $C$, $Q$, and $I$, where $C$ induces a $C_5$, $Q$ is a clique and $I$ is an independent set. Since $C$ is complete to $Q$ and anticomplete to $I$, we have $\chi (G)$ = 3+$|Q|$ and $\gamma_{i}(C) \cap \gamma_{i}(Q)$ = $\emptyset$, for $i$ = 1, 2. Let $C$ = $\{v_1,v_2, \dots,v_5\}$ such that $v_jv_{j+1} \in E(G)$ for all $j$ ($mod\ 5$). Since $\ell \geq \chi(G)+1=4+|Q|$, if $\gamma_i$ uses three colours on $C$ then there is a colour that does not appear on $C \cup Q$ and we recolour a vertex of $C$ with this colour to use at least four colours on $C$. There exist three vertices, say $v_1$, $v_2$, $v_3$, such that $\gamma_{i}(v_1)\neq \gamma_{i}(v_2) \neq \gamma_{i}(v_3)$ and $\gamma_{i}(v_1)\neq \gamma_{i}(v_3)$, for $i$ = 1, 2. We define colourings $\alpha$ and $\beta$, from $\gamma_{1}$ and $\gamma_{2}$, respectively, as follows;

\begin{equation*}
\alpha(v) = 
    \begin{cases}
                                   \gamma_1(v_1) & \textit{if } v = v_4 \\
                                   \gamma_1(v_2) & \textit{if } v = v_5 \\
                                   \gamma_1(v_1) & \textit{if } v\in I \\           \gamma_1(v) & \textit{if } v\in \{v_1,v_2,v_3\} \cup Q ;
    \end{cases}
\hspace{10mm}
\beta(v) = 
\begin{cases}
                                   \gamma_2(v_1) & \textit{if } v = v_4 \\
                                   \gamma_2(v_2) & \textit{if } v = v_5 \\
                                   \gamma_2(v_1) & \textit{if } v\in I \\                      \gamma_2(v) & \textit{if } v\in \{v_1,v_2,v_3\} \cup Q ;
  \end{cases}
\end{equation*}
\textit{Case 2}: When $G$ is $C_5$-free. Since $G$ is $(2K_2,C_4,C_5)$-free, $G$ is a split graph. Hence, $V(G)$ can be partitioned into subsets $Q$ and $I$, where $Q$ is a maximum clique and $I$ is an independent set. Let $Q$ = $\{u_1,u_2, \dots,u_p\}$. We partition $I$ into subsets $I_1$, $I_2, \dots, I_p$, where $I_j$ = $\{v\in I|\ v$ is non-adjacent to $u_j$ and adjacent to $u_1, \dots,u_{j-1}\}$, for all $j\in [p]$. We define colourings $\alpha$ and $\beta$, from $\gamma_{1}$ and $\gamma_{2}$, respectively, as follows:

\begin{equation*}
  \alpha(v) =
  \begin{cases}
                                   \gamma_1(v_j) & \textit{if } v \in I_j \\
                                   \gamma_1(v) & \textit{if } v\in Q ;
  \end{cases}
\hspace{10mm}
  \beta(v) =
  \begin{cases}
                                   \gamma_2(v_j) & \textit{if } v \in I_j \\
                                   \gamma_2(v) & \textit{if } v\in Q ;
  \end{cases}
\end{equation*}

Therefore, in both cases, we obtain $\alpha$ and $\beta$ from $\gamma_{1}$ and $\gamma_{2}$, respectively, by recolouring a vertex at each step in at most $n$ steps. Since $\alpha$ and $\beta$ induce the same partition of the vertices into colour classes and use $\chi(G)$ colours, by the Renaming Lemma there exists a path between the colourings of length at most $2n$ in $R_\ell(G)$. Hence there exists a path between $\gamma_{1}$ and $\gamma_{2}$ in $R_\ell(G)$ of length at most $4n$.
\end{proof}

A clique cutset $Q$ in a graph $G$ is called a \textit{tight clique cutset} if there exists a component $H$ of $G$-$Q$ which is complete to Q, and then $H$ is called a \textit{tight component}. 

\begin{lemma}\label{Hiso}
    Suppose $G$ is a graph with a tight clique cutset $Q$ and let $H$ be a tight component of $G$-$Q$. Let $V_2$ = $V(G) \setminus V(H)$. Let $\alpha$ and $\beta$ be two $\ell$-colourings of $G$ such that $\alpha_{V_2}$ = $\beta_{V_2}$. Suppose $H$ is $\ell_1$-mixing for all $\ell_1 \geq \chi(H)+1$. Then there exists a path between $\alpha$ and $\beta$ in $R_{\ell}(G)$.
\end{lemma}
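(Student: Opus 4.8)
The plan is to exploit the structural isolation of the tight component $H$: the only neighbours of an $H$-vertex that lie outside $H$ are the vertices of $Q$. Indeed, since $H$ is a component of $G-Q$, it has no edges to any other component of $G-Q$ in $G$ (such an edge would survive in $G-Q$ and join two distinct components), and since $Q$ is tight, $H$ is complete to $Q$. Hence for every $v \in V(H)$ we have $N_G(v) \subseteq V(H) \cup Q$, with $N_G(v) \cap V_2 = Q$. Consequently, recolouring a vertex of $H$ can never conflict with any vertex of $V_2 \setminus Q$, and the only external constraint comes from the colours appearing on $Q$.

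The second step is to pin down those external colours. Because $\alpha_{V_2} = \beta_{V_2}$ and $Q \subseteq V_2$, both colourings assign the same colours to $Q$; as $Q$ is a clique, these are $|Q|$ distinct colours forming a set $S$ with $|S| = |Q|$. Since $H$ is complete to $Q$, no colour of $S$ can appear on $H$ in any colouring of $G$, so $\alpha_H$ and $\beta_H$ both take values in the colour set $\{1,\dots,\ell\} \setminus S$, which has size $\ell - |Q|$. I would next record the chromatic bound $\chi(G) \ge \chi(H) + |Q|$, which holds because the subgraph induced on $V(H) \cup Q$ forces $\chi(H)$ colours on $H$ all distinct from the $|Q|$ colours on the clique $Q$ (as $Q$ is complete to $H$). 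Combined with the standing assumption $\ell \ge \chi(G)+1$, this gives $\ell_1 := \ell - |Q| \ge \chi(H)+1$.

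The final step is to transfer a recolouring sequence from $H$ up to $G$. Viewing $\alpha_H$ and $\beta_H$ as colourings of $H$ that use only the $\ell_1$ colours in $\{1,\dots,\ell\}\setminus S$, after relabelling these as $\{1,\dots,\ell_1\}$ they become $\ell_1$-colourings of $H$. By hypothesis $H$ is $\ell_1$-mixing with $\ell_1 \ge \chi(H)+1$, so $\mathcal{R}_{\ell_1}(H)$ is connected and there is a recolouring path from $\alpha_H$ to $\beta_H$. Relabelling back, every step of this path recolours a single vertex $v \in V(H)$ to a colour in $\{1,\dots,\ell\}\setminus S$ that avoids the colours on $N_H(v)$. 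Since such a colour also avoids $S = \alpha(Q) = \beta(Q)$, and the vertices of $V_2$ stay fixed throughout, each step is a legal recolouring of $G$. Concatenating these steps produces the desired path from $\alpha$ to $\beta$ in $\mathcal{R}_\ell(G)$.

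I expect the only delicate point to be the verification that each lifted step is valid in $G$, which rests entirely on the two facts that $N_G(v)\setminus V(H) = Q$ for $v\in V(H)$ and that the colours on $Q$ remain fixed and disjoint from those permitted on $H$; the chromatic-number inequality $\chi(G) \ge \chi(H)+|Q|$ is what guarantees enough spare colours ($\ell_1 \ge \chi(H)+1$) for the mixing hypothesis on $H$ to apply. Everything else is bookkeeping.
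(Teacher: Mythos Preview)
Your proposal is correct and follows essentially the same approach as the paper: both argue that $\chi(G)\ge\chi(H)+|Q|$ so that $\ell-|Q|\ge\chi(H)+1$ colours remain outside $\alpha(Q)=\beta(Q)$, invoke the mixing hypothesis on $H$ with that many colours, and then lift the resulting recolouring path to $G$ by keeping $V_2$ fixed. If anything, you are more explicit than the paper in verifying that $N_G(v)\cap V_2=Q$ for $v\in V(H)$ and hence that each lifted step is a legal recolouring of $G$.
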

\begin{proof}
    Let $V_1$ = $V(H)$ and $V_2$ = $V(G)\setminus V(H)$. Since $\ell > \chi(G) \ge \chi(H) + |Q|$ and $\alpha(Q) = \beta(Q)$, there exist at least $\ell_1$ colours that do not appear in $\alpha(Q)$. Let $A$ be the set of all colours that does not appear in $\alpha(Q)$.

    Since $H$ is $\ell_1$-mixing, there exists a path between $\alpha_1$, the restriction of $\alpha$ on $V_1$, and $\beta_1$, the restriction of $\beta$ on $V_1$, in $R_{\ell_1}(H)$ such that every colouring in the path uses only colours in $A$. We know that none of the colours in $A$ appear in $\alpha(Q)$, so we can extend every colouring in the path by colouring every vertex $v \in V_2$ with the colour $\alpha(v)$ = $\beta(v)$.
\end{proof}

\begin{lemma}\label{cutlemma}
    Suppose $G$ has a tight clique cutset $Q$. If every induced subgraph $F$ of $G$ is $\chi(F)+c$-mixing, for all $c\geq 1$, then $G$ is $\ell$-mixing.
\end{lemma}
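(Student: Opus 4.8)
The plan is to reduce recolouring in $G$ to recolouring in the two proper induced subgraphs $H$ (a tight component of $G-Q$) and $G_2 := G - V(H)$, both of which are mixing with the appropriate number of colours by the hypothesis applied to proper induced subgraphs. Write $V_1 = V(H)$ and $V_2 = V(G)\setminus V(H)$, so that $Q \subseteq V_2$ and, since $Q$ is a clique cutset, the only neighbours of $V_1$ outside $V_1$ lie in $Q$; by tightness $H$ is complete to $Q$. Given two $\ell$-colourings $\alpha$ and $\beta$ of $G$, I would first connect their restrictions to $V_2$ inside $G$, and then invoke Lemma \ref{Hiso} to reconcile the colourings of $H$.

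First I would set up the colour counting that makes the hypotheses usable. Because a proper colouring must use $|Q|$ distinct colours on the clique $Q$, all of them distinct from the colours on $H$ (as $H$ is complete to $Q$), we have $\chi(G) \ge \chi(H)+|Q|$, and with $\ell \ge \chi(G)+1$ this gives $\ell - |Q| \ge \chi(H)+1$. Applying the hypothesis to the induced subgraph $H$ then shows $H$ is $\ell_1$-mixing for every $\ell_1 \ge \chi(H)+1$, so Lemma \ref{Hiso} applies. Applying the hypothesis to $G_2$, and noting $\ell \ge \chi(G)+1 \ge \chi(G_2)+1$, shows $G_2$ is $\ell$-mixing; hence there is a path $c_0, c_1, \ldots, c_m$ in $\mathcal{R}_\ell(G_2)$ from $\alpha|_{V_2}$ to $\beta|_{V_2}$.

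The main work is to lift this path to $\mathcal{R}_\ell(G)$. Starting from $\hat c_0 := \alpha$, I would build extensions $\hat c_i$ of $c_i$ to all of $V(G)$ and connect $\hat c_i$ to $\hat c_{i+1}$ by a short path in $\mathcal{R}_\ell(G)$. At step $i$ a single vertex $v \in V_2$ is recoloured from $a$ to $b$. If $v \notin Q$, then $v$ is anticomplete to $H$, so recolouring $v$ alone while holding $H$ fixed is a single valid move. If $v \in Q$, then $v$ is complete to $H$, and I must first vacate the colour $b$ from $H$: since $c_{i+1}$ is proper on the clique $Q$, the colour $b$ does not appear on $Q$ under $c_i$, so $H$ currently uses only colours outside the $|Q|$ colours of $c_i(Q)$. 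Within this palette of $\ell - |Q| \ge \chi(H)+1$ colours a colouring of $H$ avoiding $b$ exists, and since $H$ is $(\ell-|Q|)$-mixing I can reach it, every intermediate colouring using only palette colours and hence remaining valid in $G$. Once $b$ is free on $H$, recolouring $v$ to $b$ is a valid single move, producing an extension $\hat c_{i+1}$ of $c_{i+1}$. Iterating reaches an extension $\hat c_m$ of $\beta|_{V_2}$, which agrees with $\beta$ on $V_2$, so Lemma \ref{Hiso} connects $\hat c_m$ to $\beta$ in $\mathcal{R}_\ell(G)$ and completes the path from $\alpha$ to $\beta$.

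I expect the crux to be the bookkeeping that keeps every recolouring of $H$ valid across all of $G$: one must check that the colour $b$ being vacated never lies among the colours forced on $Q$, and that the palette disjoint from $c_i(Q)$ is large enough ($\ell - |Q| \ge \chi(H)+1$) for $H$ to be recoloured freely while avoiding $b$. Once this colour-counting is secured, the mixing of $H$ and of $G_2$ furnished by the hypothesis does the rest.
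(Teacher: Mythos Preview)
Your proposal is correct and follows essentially the same strategy as the paper: lift a recolouring path in $\mathcal{R}_\ell(G_2)$ to $\mathcal{R}_\ell(G)$ step by step, using the mixing of $H$ (together with the count $\ell-|Q|\ge \chi(H)+1$) to vacate a colour on $H$ before recolouring a vertex of $Q$, and then invoke Lemma~\ref{Hiso} at the end. The only cosmetic difference is that the paper first reduces $\epsilon|_{V_1}$ to a $\chi(H)$-colouring so that a single globally unused colour $r$ exists, and then vacates $b$ by a one-shot colour-class swap to $r$, whereas you appeal directly to the mixing of $H$ within the $(\ell-|Q|)$-colour palette; both arguments are equivalent in spirit.
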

\begin{proof}
    Suppose $G$ has a tight clique cutset $Q$ and $H$ is a tight component of $G$-$Q$. Let $V_1$ = $V(H)$, $V_2$ = $V(G) \setminus V_1$, $G_2$ = $G[V_2]$, and $\ell_2 \geq \chi(G_2)+1$.

    Let $\alpha$ be an $\ell$-colouring of $G$ and let $\gamma$ be a $\chi$-colouring of $G$. We shall prove that there exists a path between $\alpha$ and $\gamma$ in $R_{\ell}(G)$. Since $G_2$ is $\ell_2$-mixing, there exists a path, say $P$, between $\alpha_2$, the restriction of $\alpha$ on $V_2$, and $\gamma_2$, the restriction of $\gamma$ on $V_2$, in $R_{\ell_2}(G_2)$.

    Let $\epsilon_2$ and $\psi_2$ be any two colourings adjacent in the path $P$. Note that any $\ell$-colouring of $G_2$ can be extended to an $\ell$-colouring of $G$.

    \begin{claim}
        There exists a path between any two $\ell$-colourings $\epsilon$ and $\psi$ of $G$, whose restrictions on $V_2$ are $\epsilon_2$ and $\psi_2$, respectively.
    \end{claim}

    By Lemma \ref{Hiso}, it is sufficient to prove that there exists a path between $\epsilon$ and some colouring $\psi$ whose restriction on $V_2$ is $\psi_2$. We may also assume that $\epsilon_1$, the restriction of $\epsilon$ on $V_1$, uses $\chi(H)$ colours.

    Since $\epsilon_2$ and $\psi_2$ are adjacent in the path $P$, there is a unique vertex $v \in V_2$ such that $\epsilon_{2}(v) \neq \psi_{2}(v)$. If $v \notin Q$ or $\psi(v)\notin \epsilon(V_1)$, we can recolour $v$ in $\epsilon$ with the colour $\psi(v)$ to obtain $\psi$.

    Let $v\in Q$ and $\psi(v)\in \epsilon(V_1)$. Since $\ell > \chi(H)+|Q|$, there exists a colour, say $r$, that does not appear in $\epsilon(V_1\cup Q)$. Starting with $\epsilon$ recolour every vertex in $V_1$ coloured $\psi(v)$ with the colour $r$, then recolour $v$ with the colour $\psi(v)$ to obtain the colouring $\psi$.
\end{proof}

\begin{proof}[Proof of Theorem \ref{thm:p5c4}] 
Let $G$ be a connected $(P_5, C_4)$-free graph. The proof is by induction on the number of vertices. If there is no induced $C_5$ in $G$, then $G$ is a chordal graph and hence $R_\ell(G)$ is connected \cite{bonamy2014}. If there is an induced $C_5$ in $G$, then by the structure of $(P_5, C_4)$-free graphs given in \cite{fouquet1995} there exists a blowup of $C_5$, say $C$, and a clique $Q$ such that $C$ is complete to $Q$ and $C$ is a component of $G$-$Q$. Hence the proof follows from Lemma \ref{cutlemma}.
\end{proof}

Note that there are other classes of graphs that admit a tight clique cutset, for example, chordal graphs.
\section{Conclusion}
\label{sec:conclusion}

We have proved a dichotomy theorem: The reconfiguration graph of the ($k$+1)-colourings of a graph $G$, $R_{k+1}(G)$, is connected for every $k$-colourable $H$-free graph $G$ if and only if $H$ is an induced subgraph of $P_4$ or $P_3+P_1$. It is interesting to note that these are exactly the family of $H$-free graphs for which the colouring problem is polynomial-time solvable \cite{kral2001}. Next it would be natural to do try to do the same for $(H_1, H_2)$-free graphs.

One might first consider graphs on four vertices. There are 11 such graphs, however, by the dichotomy result, we do not need to consider $P_4$ or $P_3+P_1$. Of the remaining 36 pairs of the other 9 graphs on four vertices, the recolouring diameter is infinite for 3 pairs by Theorem 4 and for 10 other pairs by Theorem 5, and finite for $(2K_2, C_4)$-free graphs. Further, it is easy to see that $C_6$ has a frozen 3-colouring. The only graphs on four vertices which are induced subgraphs of $C_6$ are $P_4$, $P_3+P_1$ and $2K_2$. Thus it is of particular interest to study the connectivity of the reconfiguration graph of the $k$-colourings of $(2K_2, H)$-free graphs, where $H$ is a graph with at least four vertices and is not $P_4$, $P_3+P_1$ or $C_4$.

Theorem 8 states that the class of $(P_5, C_4)$-free graphs is $\ell$-mixing, however the $\ell$-recolouring diameter is still unknown. It is interesting to find other classes of $(H_1, H_2)$-free graphs which are $\ell$-mixing and the $\ell$-recolouring diameter is linear or polynomial. In \cite{feghali2021}, Feghali and Merkel asked if there is a $k$-colourable graph $G$ such that $\mathcal{R}_{k+1}(G)$ is disconnected but every component of $\mathcal{R}_{k+1}(G)$ has at least two vertices. When $k$ = 2, L. Cereceda \textit{et.al} \cite{cereceda2008} proved that there exist bipartite graphs that are not 3-mixing and do not admit a frozen 3-colouring, for example $C_8$. Furthermore, let $H$ be any graph with $V(H)$ = $\{v_1, v_2, \dots, v_n\}$ which admits a frozen $\ell$-colouring, $\ell\geq 3$. We can construct a connected graph $G$ from $H$ by adding vertices $u_1, \dots, u_i$, $1 < i\leq n$, and edges $v_{j}u_{j}$ for all $j\in [i]$. Note that $\chi(G)$ = $\chi(H)$ and $G$ contains $H$ as an induced subgraph, but $G$ is not $\ell$-mixing and does not admit a frozen $\ell$-colouring. We ask the following: 
\begin{question}
Is there a graph $G$ which is not $\ell$-mixing, $\ell\geq 4$, and does not contain an induced subgraph which admits a frozen $\ell$-colouring?
\end{question}




\end{document}